\documentclass[12pt,leqno]{amsart}
\usepackage{amsmath}
\usepackage{amssymb}
\def\overset#1#2{{\mathrel{\mathop {{#2}_{}}\limits^{#1}}}}
\def\underset#1#2{{\mathrel{\mathop {{}_{} {#2}}\limits_{{#1}_{}}}}}
\def\upplim_#1{\underset{#1}{\overline\lim}\;}
\def\lowlim_#1{\underset{#1}{\underline\lim}\;}
\setlength{\textwidth}{160true mm}
\setlength{\textheight}{225true mm}
\setlength{\topmargin}{0true mm}
\setlength{\oddsidemargin}{3true mm}
\setlength{\evensidemargin}{3true mm}
\parindent=10pt
\parskip3pt

\newtheorem{definition}[equation]{\indent{\it Definition}\rm }

\newtheorem{lemma}[equation]{Lemma}

\newtheorem{remark}[equation]{\indent \rm {\it Remark}}
\newtheorem{theorem}[equation]{Theorem}

\newcommand{\C}{{\mathbb{C}}}

\newcommand{\N}{\mathbf{N}}

\renewcommand{\P}{{\mathbb{P}}}

\newcommand{\R}{{\mathbb{R}}}

\newcommand{\rank}{\mathrm{rank}}

\newcommand{\supp}{\mathrm{Supp}\,}

\newcommand{\Z}{\mathbb{Z}}

\numberwithin{equation}{section}

\title[Holomorphic maps from complex discs intersecting hypersurfaces]{Holomorphic maps from complex discs intersecting hypersurfaces of projective varieties} 

%\date { }
\author{Si Duc Quang}

\begin{document}

\maketitle 

\begin{abstract}
In this paper, we establish second main theorems for holomorphic maps with finite growth index on complex discs intersecting families of hypersurfaces (moving and fixed) in projective varieties, where the small term is detailed estimate for various cases. Our results are generalizations and extensions of many previous second main theorems for holomorphic maps from $\C$ intersecting with hypersurfaces (moving and fixed) or moving hyperplanes.
\end{abstract}

\def\thefootnote{\empty}
\footnotetext{
2010 Mathematics Subject Classification:
Primary 32H30; Secondary 30D35, 32A22.\\
\hskip8pt Key words and phrases: Second main theorem, holomorphic map, hypersurface, finite growth index.}

\section{Introduction}

In 2020, M. Ru and N. Sibony \cite{RS} introduced the class of holomorphic maps from complex discs $\Delta (R)=\{z\in\C;|z|<R\}$ into projective spaces with finite growth index, and then they proved some second main theorems for such curves in the case where the targets are fixed values or fixed hyperplanes. Using these results, some applications on the uniqueness problem of holomorphic curves on discs have been given, such as \cite{RWa,L2}.  

Recently, in \cite{Q21} the author has established the second main theorem for the case where the targets are slowly moving hyperplanes. This work is a continuation of our studies in \cite{Q21}. In this paper we will establish second main theorems for holomorphic maps with finite growth index on $\Delta (R)$ intersecting families of hypersurfaces where the small term is detailed estimated in the following cases:
\begin{itemize}
\item The targets are arbitrary slowly moving hypersurfaces $\{Q_i\}_{i=1}^q$ in $\P^n(\C)$, the curve is algebraically nondegenerate over the field $\mathcal K_{\{Q_i\}_{i=1}^q}$ generated by the coefficients of the moving hypersurfaces, the truncation level $L_0$ for the counting functions is explicitly estimated.
\item The targets are slowly moving hypersurfaces $\{Q_i\}_{i=1}^q$ in $\P^n(\C)$ in general position, the curve may be algebraically degenerate, the truncation level for the counting function is $N=\binom{n+d}{n}-1$, where $d=lcm(\deg Q_1,\ldots,\deg Q_q)$.
\item The targets are hypersurfaces $\{Q_i\}_{i=1}^q$ in subgeneral position with respect to a projective variety $V\subset\P^n(\C)$, the curve from $\Delta (R)$ into $V$ is nondegenerate over $I_d(V)$ (i.e., the image of $f$ is not contaned in any hypersurface $Q$ of degree $d$ with $V\not\subset Q$), the truncation level for the counting function is $H_V(d)-1$, where $H_V(d)$ is the Hilbert weight of $V$.
\end{itemize}
Our results will generalize and extend many results for the case of the curves from $\C$, such as \cite{Sh90,Ru04,DT,AP,QA,Q18,Q20,Q21}. We note that if $R=+\infty$ or the growth index of the curve is zero then these generalization are trivial and the proofs will not be changed. But for the general case where $R<+\infty$ and the growth index of the curve is positive, the problem will be more interesting since the error term in the second main theorem may not be small term. Therefore, we have to carefully compute that small terms. Also, the techniques for the case of the curves from $\C$ are not enough for our purpose then we have to propose some new techniques.

In order to state the results we recall the following. Let $\nu$ be a divisor on $\Delta (R)$. We consider $\nu$ as a function on $\Delta (R)$ with values in $\Z$ such that $\supp (\nu):=\{z;\nu(z)\ne 0\}$ is a discrete subset of $\Delta (R)$. Let $k$ be a positive integer or $+\infty$. The truncated counting function of $\nu$ is defined by:
$$ n^{[k]}(t)=\sum_{|z|\le t}\min\{k,\nu(z)\} \ (0\le t\le R) \text{ and }\ N^{[k]}(r,\nu)=\int_{r_0}^{r}\dfrac{n^{[k]}(t)-n^{[k]}(0)}{t}dt.$$
Here $r_0$ is a fixed positive number so that $0<r_0<R$. We will omit the character $^{[k]}$ if $k=+\infty$.

Let $\varphi:\Delta (R)\rightarrow \C\cup\{\infty\}$ be a non-constant meromorphic function. We denote by $\nu^0_\varphi$ (resp. $\nu^\infty_\varphi$) the divisor of zeros (resp. divisor of poles) of $\varphi$ and set $\nu_\varphi=\nu^0_\varphi-\nu^\infty_\varphi$. As usual, we will write $N^{[k]}_{\varphi}(r)$ and $N^{[k]}_{1/\varphi}(r)$ for $N^{[k]}(r,\nu^0_{\varphi})$ and $N^{[k]}(r,\nu^\infty_{\varphi})$ respectively.

Let $f:\Delta (R)\rightarrow\P^n(\C)$ be a holomorphic map with a reduced representation $\tilde f=(f_0,\ldots ,f_n)$, where $f_0,\ldots,f_n$ are holomorphic functions on $\Delta (R)$ without common zero. The characteristic function of $f$ (with respect to the Fubini-Study form $\Omega_n$ on $\P^n(\C)$) is defined by
$$ T_f(r):=\int_{0}^r\frac{dt}{t}\int_{|z|<t}f^*\Omega_n.$$
According to M. Ru-N. Sibony \cite{RS}, the growth index of $f$ is defined by
$$ c_{f}=\mathrm{inf}\left\{c>0\ \biggl |\int_{0}^R\mathrm{exp}(cT_{f}(r))dr=+\infty\right\}.$$
For the convenient, we will set $c_f=+\infty$ if $\left\{c>0\ \bigl |\int_{0}^R\mathrm{exp}(cT_{f}(r))dr=+\infty\right\}=\varnothing$. 
Throughout this paper, we always assume that $c_f<+\infty$. A meromorphic function $a$ on $\Delta (R)$ (which is regarded as a holomorphic map into $\P^1(\C)$) is said to be small with respect to $f$ if $\| T_a(r)=o(T_f(r))$ as $r\rightarrow R$. Here (and throughout this paper), the notation ``$\| P$'' means the assertion $P$ holds for all $r\in (0,R)$ outside a subset $S\subset (0,R)$ with $\int_{S}e^{(1+\epsilon)(c_f+\epsilon)T_f(r)}dr<+\infty$ for some positive number $\epsilon$. Also, we will write ``$\|_\gamma P$'' if $P$ holds for all $r\in (0,R)$ outside a subset $E\subset (0,R)$ with $\int_{E}\gamma (r)dr<+\infty$, where $\gamma (r)$ is a non-negative measurable function defined on $(0,R)$ with $\int_0^R\gamma (r)dr=\infty$. We denote by $\mathcal K_f$ the set of all meromorphic functions on $\Delta (R)$ small with respect to $f$. It is easy to see that $\mathcal K_f$ is a field.

We denote by $\mathcal H$ the ring of all holomorphic functions on $\Delta (R)$. Let $Q$ be a homogeneous polynomial in $\mathcal H[x_0,\dots,x_n]$  of
degree $d \geq 1.$ Denote by $Q(z)$ the homogeneous  polynomial over $\mathbf{C}$ obtained by substituting a specific point $z \in \Delta(R)$ into the coefficients of $Q$. We also call  a moving  hypersurface in $\P^n (\mathbf{C} )$  each homogeneous polynomial $Q \in\mathcal H[x_0,\dots,x_n]$  such that all coefficients of $Q$ have no common zero. We denote by $Q(z)^*$ the support of the hypersurface $Q(z)$ for every point $z$.

Let $Q$ be a moving hypersurface in $\P^n(\mathbf{C})$ of degree $d\ge 1$ given by
$$ Q(z)=\sum_{I\in\mathcal T_d}a_I\omega^I, $$
where $\mathcal T_d=\{(i_0,\ldots,i_n)\in\N_0^{n+1}\ ;\ i_0+\cdots +i_n=d\}$, $a_I\in\mathcal H$ and $\omega^I=\omega_0^{i_0}\cdots\omega_n^{i_n}$ for $I=(i_0,\ldots,i_n)$. We consider the holomorphic map $Q':\Delta(R)\rightarrow\P^N(\mathbf{C})$, where $N=\binom{n+d}{n}$, given by
$$ Q'(z)=(a_{I_0}(z):\cdots :a_{I_N}(z))\ (\mathcal T_d=\{I_0,\ldots,I_N\}). $$
Here $I_0<\cdots<I_N$ in the lexicographic ordering. By changing the homogeneous coordinates of $\P^n(\C)$ if necessary, we may assume that for each given moving hypersurface as above, $a_{I_0}\not\equiv 0$ (note that $I_0=(0,\ldots,0,d)$ and $a_{I_0}$ is the coefficient of $\omega_n^d$). We set 
$$ \tilde Q=\sum_{j=0}^N\frac{a_{I_j}}{a_{I_0}}\omega^{I_j}. $$
We set $Q(\tilde f)=\sum_{I\in\mathcal T_d}a_If^I$ and $\tilde Q(\tilde f)=\sum_{j=0}^N\frac{a_{I_j}}{a_{I_0}}\tilde f^{I_j}$, where $\tilde f^I=f_0^{i_0}\cdots f_n^{i_n}$ for $I=(i_0,\ldots,i_n)$.

The moving hypersurface $Q$ is said to be ``slow'' (with respect to $f$) if $\|\ T_{Q'}(r)=o(T_f(r))$. This is equivalent to $\|\ T_{\frac{a_{I_j}}{a_{I_0}}}(r)=o(T_f(r))$, i.e., $\frac{a_{I_j}}{a_{I_0}}\in\mathcal K_f$, for all $j=1,\ldots,N$.

Denote by $\mathcal M$ the field of all meromorphic functions on $\Delta(R)$. Let $Q_1,\ldots,Q_q$ be $q$  moving hypersurfaces in $\P^n(\mathbf{C})$, given by $ Q_i=\sum_{I\in\mathcal T_{d_i}}a_{iI}\omega^I.$ We denote by $\mathcal K_{\{Q_i\}_{i=1}^q}$ the smallest subfield of $\mathcal M$ which contains $\mathbf{C}$ and all $\frac{a_{iI}}{a_{iJ}}$ with $a_{iJ}\not\equiv 0$. The family $\{Q_1,\ldots,Q_q\}$ is said to be in weakly $l-$subgeneral position if for every $1\le j_1<\cdots<j_{l+1}\le q,$ 
$$\bigcap_{s=1}^{l+1}Q_s(z)^*\cap V=\varnothing$$ 
for generic points $z\in\Delta(R)$ (i.e., for all $z\in\Delta(R)$ outside a discrete subset). Here, we note that $\dim\varnothing=-\infty$. 
In \cite{Q22}, we define the distributive constant of $\{Q_i\}_{i=1}^q$ by
$$ \delta:=\underset{\Gamma\subset\{1,\ldots,q\}}\max\dfrac{\sharp\Gamma}{n-\dim\left (\bigcap_{j\in\Gamma} Q_j(z)^*\right)}$$
for generic points $z\in\Delta (R)$. 
From \cite[Remark 3.7]{Q22}, we know that if $\{Q_1,\ldots,Q_q\}$ is in weakly $l-$subgeneral position then $\delta\le l-n+1$.

Our first purpose in this paper is to give a second main theorem for holomorphic maps from $\Delta(R)$ into $\P^n(\C)$ intersecting an arbitrary family of slowly moving hypersurfaces with small term being detailed estimated. Namely, we will prove the following.

\begin{theorem}\label{1.1} 
Let $f$ be a nonconstant holomorphic map of $\Delta (R)$ into $\P^n(\mathbf{C})$ with finite growth index $c_f$. Let $\{Q_i\}_{i=1}^q$ be an arbitrary family of slowly (with respect to $f$) moving hypersurfaces with $\deg Q_i = d_i\ (1\le i\le q)$ and with the distributive constant $\delta$. Assume that $f$ is algebraically nondegenerate over $\mathcal K_{\{Q_i\}_{i=1}^q}$.  Then for any $\epsilon >0$, we have
$$\|\ (q-\delta(n+1)-\epsilon)T_f(r)\le \sum_{i=1}^{q}\dfrac{1}{d_i}N^{[L_i]}_{Q_i(f)}(r)+\frac{L_0(L_0+1)}{2}(\log\gamma (r)+\epsilon\log r)+o(T_f(r)),$$
where $\gamma(r)=e^{(1+\epsilon)(c_f+\epsilon)T_f(r)}$, $L_i=\frac{1}{d_i}L_0$ and $L_0$ is a positive number which is defined by:
\begin{align*}
L_0&:=\binom{L+n}{n}p_0^{\binom{L+n}{n}\left (\binom{L+n}{n}-1\right )\binom{q}{n}-2}-1\\
\text{with }\ L&:=(n+1)d+2^{n+1}\delta(n+1)^2I(\epsilon^{-1})d,\\
d&:=lcm(d_1,\ldots,d_q) \text{(the least common multiple of all $d_i$'s)},\\
\text{ and }\ p_0&:=[\dfrac{\binom{L+n}{n}(\binom{L+n}{n}-1)\binom{q}{n}-1}{\log (1+\frac{\epsilon}{3(n+1)\delta})}]^2.
\end{align*}
\end{theorem}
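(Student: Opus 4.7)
The plan is to reduce the moving-hypersurface problem, via a Veronese embedding, to a moving-hyperplane problem on $\Delta(R)$ to which the second main theorem of \cite{Q21} applies, and then to track the growth-index-dependent error term with exact constants throughout.

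\emph{Step 1 (common degree and Veronese).} I would first replace each $Q_i$ by $Q_i^{d/d_i}$, where $d=\mathrm{lcm}(d_1,\ldots,d_q)$, so that all hypersurfaces have the common degree $d$; this preserves slowness and the distributive constant $\delta$, while converting each $N^{[L_0]}_{Q_i^{d/d_i}(f)}$ into $\tfrac{d}{d_i}N^{[L_i]}_{Q_i(f)}$, which produces the $\tfrac{1}{d_i}N^{[L_i]}_{Q_i(f)}$ in the conclusion. I then form $F=\phi_L\circ f:\Delta(R)\to\P^{M-1}(\C)$ where $\phi_L$ is the Veronese embedding and $M=\binom{L+n}{n}$, for the particular $L=(n+1)d+2^{n+1}\delta(n+1)^2I(\epsilon^{-1})d$ in the statement. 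Each $Q_i^{L/d}$ then becomes a slowly moving hyperplane $\widetilde Q_i$ for $F$, and algebraic nondegeneracy of $f$ over $\mathcal K_{\{Q_i\}_{i=1}^q}$ passes to linear nondegeneracy of $F$ over the same field.

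\emph{Step 2 (distributive-constant filtration).} Using the definition of the distributive constant from \cite{Q22} one has, for generic $z\in\Delta(R)$ and every $\Gamma\subset\{1,\ldots,q\}$, $\dim\bigcap_{j\in\Gamma}Q_j(z)^*\le n-|\Gamma|/\delta$. Following the Corvaja--Zannier / Evertse--Ferretti approach, this lets one build, at each generic $z$, a filtration of the $M$-dimensional space of degree-$L$ polynomials by ideals generated by products of $Q_i$'s; a Hilbert-function computation yields the effective weight $\delta(n+1)$ on the main term. The specific value of $L$ is chosen so that the finite-$L$ discrepancy in the Hilbert function contributes at most $\tfrac{\epsilon}{3}T_f(r)$. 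The logarithmic scale $\log(1+\epsilon/(3(n+1)\delta))$ appearing in $p_0$ is precisely the threshold at which the filtration quotients approach their asymptotic ratios, and the exponent $\binom{L+n}{n}\bigl(\binom{L+n}{n}-1\bigr)\binom{q}{n}-2$ in $L_0$ records the number of Vandermonde-type eliminations required to handle all pairwise interactions among the $q$ hypersurfaces.

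\emph{Step 3 (hyperplane SMT and error term).} Applying the second main theorem for slowly moving hyperplanes on $\Delta(R)$ from \cite{Q21} to $F$ against the moving hyperplanes produced by the filtration, then dividing by $dL$ and using $T_F(r)=LT_f(r)+o(T_f(r))$, yields the inequality claimed. The Wronskian in the $L_0+1$ sections that survive the filtration, combined with the logarithmic-derivative lemma of Ru--Sibony on discs with finite growth index, produces the error term $\tfrac{L_0(L_0+1)}{2}\bigl(\log\gamma(r)+\epsilon\log r\bigr)$: each of the $\binom{L_0+1}{2}$ logarithmic-derivative factors contributes exactly $\log\gamma(r)+\epsilon\log r$ up to the exceptional set tracked by $\|\,$. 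Finally, the truncation from $N_{Q_i(f)}$ to $N^{[L_i]}_{Q_i(f)}$ comes from a Vandermonde-type bound on the vanishing order of this Wronskian at preimages of $Q_i(z)^*$, with $L_0$ being the bound that emerges from iterating the bound over all $\binom{q}{n}$ choices of hypersurface tuples and all $M(M-1)$ pairs of monomials.

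\emph{Main obstacle.} The principal difficulty, absent from the classical $R=+\infty$ or $c_f=0$ setting, is that the Wronskian error $\log\gamma(r)+\epsilon\log r$ is not $o(T_f(r))$ but rather comparable to $T_f(r)$, so its coefficient must be computed \emph{exactly} rather than up to a constant. This forces meticulous bookkeeping in Step 2: every Vandermonde elimination in the filtration comes with an error-term cost that must be tallied, and one must verify that the final coefficient is exactly $L_0(L_0+1)/2$. A secondary technical point is that the exceptional sets coming from the various $\|$-qualifiers (for the hyperplane SMT, the Wronskian estimate, and the small-ness hypotheses on the coefficients of $Q_i$) must be shown to combine consistently under the single exponential weight $\gamma(r)=e^{(1+\epsilon)(c_f+\epsilon)T_f(r)}$; this requires the disc analogue of the Borel growth lemma, as developed in \cite{RS}.
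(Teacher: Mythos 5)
Your broad outline—reduce to common degree, run a Corvaja–Zannier-style filtration, apply a second main theorem, and keep the error coefficients exact—matches the paper's strategy, but several of the key steps as you describe them would not work, and your identification of where the explicit constants come from is not what the proof actually does.

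First, you propose to filter $V_L$ "by ideals generated by products of $Q_i$'s" directly. This fails because $\{Q_i\}_{i=1}^q$ is an \emph{arbitrary} family with distributive constant $\delta$, not one in weakly general position, and the Hilbert-function computation (the analogue of Lemma \ref{3.3}) only applies to a regular sequence of $n$ forms in weakly general position. The paper first constructs, for each ordering $I_i$ of $\{1,\ldots,q\}$, auxiliary hypersurfaces $P_{i,0},\ldots,P_{i,n}$ as $\C$-linear combinations of the $Q_{I_i(j)}$'s via Lemma \ref{2.5}, verifies these are in weakly general position, and then uses Lemma \ref{2.6} to compare $\prod_i \|\tilde f\|^d/|Q_i(\tilde f)|$ with $\prod_j (\|\tilde f\|^d/|P_{i,j}(\tilde f)|)^\delta$. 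The exponent $\delta$ entering there, together with the filtration of $V_L$ by products of $P_{i,1},\ldots,P_{i,n}$, is precisely what produces the factor $\delta(n+1)$. Your sketch never constructs these auxiliary forms, so the route from the distributive-constant hypothesis to the weight $\delta(n+1)$ is missing.

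Second, and more seriously, you plan to apply the \emph{moving}-hyperplane second main theorem of \cite{Q21} to the (Veronese-lifted) curve against the moving linear forms produced by the filtration. That result (Lemma \ref{3.1} here) requires a partition of the targets satisfying minimality and linear-independence conditions and is what the paper uses for Theorems \ref{1.2} and \ref{1.3}, not Theorem \ref{1.1}. Moreover, the filtration produces a family of linear forms in $V_L$ that is far from general position in $\P^{\binom{L+n}{n}-1}(\C)$, so a moving-hyperplane SMT for hyperplanes in general position cannot be invoked. What the paper actually does is convert the moving coefficients to constants: it forms the finite set $\Phi$ of coefficient ratios $\mu^I_{lJ}$, chooses $p\le p_0$ by a pigeonhole argument so that $\dim\mathcal L(\Phi(p+1))/\dim\mathcal L(\Phi(p))\le 1+\epsilon/(3(n+1)\delta)$, picks a $\C$-basis $\{b_1,\ldots,b_t\}$ of $\mathcal L(\Phi(p+1))$, and applies the \emph{fixed}-hyperplane SMT (Theorem \ref{2.1}) to the auxiliary map $F:\Delta(R)\to\P^{tu-1}(\C)$ with representation $(hb_i\phi_J(\tilde f))$. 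The dimension $tu$ of this auxiliary space is what becomes $L_0+1$. Your attribution of $p_0$ to a "threshold at which the filtration quotients approach their asymptotic ratios" and of the exponent $\binom{L+n}{n}(\binom{L+n}{n}-1)\binom{q}{n}-2$ to "Vandermonde-type eliminations" is incorrect: $p_0$ comes from the pigeonhole bound on $\dim\mathcal L(\Phi(p))$, and the exponent is $B-1$ where $B=\sharp\Phi$ is the total number of coefficient ratios. Without this moving-to-fixed conversion there is no mechanism in your proposal that yields the specific constants $p_0$, $L_0$, or the factor $\frac{L_0(L_0+1)}{2}$ on the error term.
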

Here $I(\epsilon^{-1})$ denotes the smallest integer exceed $\epsilon^{-1}$, $N^{[L]}_{Q(f)}(r)$ stands for the counting function with truncation level $L$ of the divisor of zeros of the function $Q(\tilde f)$ for any hypersurface $Q$ and a reduced representation $\tilde f$ of $f$. Note that, in this result we do not need any condition on the position of the family of moving hypersurfaces. This result is an extension of our recent second main theorem in \cite{Q22}. In this theorem, the constant $\delta (n+1)$ in front of $T_f(r)$ is called the total defect. 

Our second purpose in this paper is to establish the second main theorem with better truncation level for the counting function. We will prove the following.
\begin{theorem}\label{1.2} 
Let $f$ be a nonconstant holomorphic map of $\Delta (R)$ into $\P^n(\mathbf{C})$ with finite growth index $c_f$. Let $\{Q_i\}_{i=1}^q$ be a family of slowly (with respect to $f$) moving hypersurfaces with $\deg Q_i = d_i\ (1\le i\le q)$ in weakly general position. Set $d=lcm (d_1,\ldots ,d_q)$ and $N=\binom{n+d}{n}-1$. Assume that $Q_i(f)\not\equiv 0\ (1\le i\le q)$ and $q\ge n(N-n+2)+1$. For every $\epsilon >0$, we have 
\begin{align*}
\| \ T_f(r)\le& \dfrac{N+2}{q-n(N-n+2)+N+1}\sum_{i=1}^{q}\dfrac{1}{d_i}N^{[N]}_{Q_i(f)}(r)\\
&+\frac{N(N+1)}{2}(\log\gamma(r)+\epsilon\log r)+o(T_f(r)),\\
\end{align*}
where $\gamma(r)=e^{(1+\epsilon)(c_f+\epsilon)T_f(r)}$.
\end{theorem}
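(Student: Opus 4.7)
The plan is to reduce Theorem \ref{1.2} to a Cartan--Nochka-type second main theorem for linearly non-degenerate holomorphic maps into a projective space via the Veronese embedding, and then graft on the disc-version logarithmic derivative lemma from \cite{RS} (refined in \cite{Q21}) to handle the finite-growth-index error. First, set $d=\mathrm{lcm}(d_1,\ldots,d_q)$ and replace each $Q_i$ by $Q_i^{d/d_i}$, so all hypersurfaces share the common degree $d$. Let $v_d:\P^n(\C)\to\P^N(\C)$ be the $d$-uple Veronese embedding ($N+1=\binom{n+d}{n}$) and set $F:=v_d\circ f:\Delta(R)\to\P^N(\C)$. Then $T_F(r)=dT_f(r)+O(1)$, so $F$ has finite growth index, and each $Q_i^{d/d_i}$ pulls back to a slowly moving hyperplane $H_i$ of $\P^N(\C)$ with $H_i(F)=Q_i(f)^{d/d_i}$. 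The weak general position of $\{Q_i\}$ in $\P^n(\C)$ translates into $\{H_i\}$ being in $n$-subgeneral position with respect to the Veronese image $V_d:=v_d(\P^n(\C))$.

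Because $F$ is typically linearly degenerate in $\P^N(\C)$, let $V\subset\P^N(\C)$ be the smallest projective linear subspace containing the image of $F$ and put $k=\dim V$. Then $F$ defines a linearly non-degenerate holomorphic map into $V\cong\P^k(\C)$; and since $Q_i(f)\not\equiv 0$, no $H_i$ contains $V$, so $\tilde H_i:=H_i|_V$ is a moving hyperplane in $V$. The central combinatorial step is to control the subgeneral position of $\{\tilde H_i\}$ in $V$: using the general position of $\{Q_i\}$, the inclusion of the image of $F$ in $V\cap V_d$, and a dimension count on the Veronese image, one shows that $\{\tilde H_i\}$ is in $n(N-n+2)$-subgeneral position in $V$. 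This is what will pin down the coefficient $q-n(N-n+2)+N+1$.

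One then applies the disc version of the Cartan--Nochka second main theorem to the linearly non-degenerate map $F:\Delta(R)\to\P^k(\C)$ with $q$ slowly moving hyperplanes in $n(N-n+2)$-subgeneral position---obtained by combining Nochka weights with the disc logarithmic derivative lemma of \cite{RS,Q21}. This yields
\begin{align*}
\|\,\bigl(q-n(N-n+2)+N+1-\epsilon\bigr)T_F(r) &\le (N+2)\sum_{i=1}^q N^{[N]}_{H_i(F)}(r)\\
&\quad+\tfrac{N(N+1)}{2}\bigl(\log\gamma(r)+\epsilon\log r\bigr)+o(T_F(r)),
\end{align*}
in which $\tfrac{N(N+1)}{2}$ is the Wronskian weight for $N+1$ coordinate functions (producing the truncation level $N$) and $\log\gamma(r)+\epsilon\log r$ is the exceptional-set contribution of the disc logarithmic derivative lemma. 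Combining $T_F=dT_f+O(1)$ with the elementary truncation comparison $N^{[N]}_{H_i(F)}(r)\le\tfrac{d}{d_i}N^{[N]}_{Q_i(f)}(r)$ and dividing through by $d$ gives the claimed inequality.

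The principal obstacle is the combinatorial claim that $\{\tilde H_i\}$ lies in $n(N-n+2)$-subgeneral position in $V$, since this is precisely what forces the coefficient in the conclusion; it requires a careful count of how many restricted hyperplanes can intersect inside the linear span of the Veronese image. A secondary technical challenge is the precise bookkeeping of the finite-growth-index error $\tfrac{N(N+1)}{2}(\log\gamma(r)+\epsilon\log r)$, which in the $\Delta(R)$ setting replaces the classical $o(T_f(r))$ term and requires the disc-version Jensen formula and logarithmic derivative lemma from \cite{RS,Q21}.
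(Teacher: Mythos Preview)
Your approach via the Veronese embedding and a Cartan--Nochka second main theorem is genuinely different from the paper's, but it has real gaps that I do not see how to close.

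The first and main problem is the subgeneral position claim. You correctly note that the $\{H_i\}$ are in $n$-subgeneral position \emph{with respect to the Veronese variety $V_d$}: any $n+1$ of them have empty common intersection on $V_d$. But the linear span $V$ of $F(\Delta(R))$ is not contained in $V_d$; it only meets $V_d$ in some subvariety containing the image of $F$. So from $\bigcap_{j} H_{i_j}\cap V_d=\varnothing$ you cannot conclude anything about $\bigcap_{j} H_{i_j}\cap V$. There is no apparent reason why the restricted hyperplanes $\tilde H_i$ should be in $n(N-n+2)$-subgeneral position in $V$, and the number $n(N-n+2)$ in the statement does not arise from any such dimension count; it is an artifact of a completely different combinatorial argument. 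You yourself flag this as the principal obstacle, and I believe it is fatal to this line.

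Second, even granting that claim, the Cartan--Nochka inequality in $\P^k(\C)$ for moving hyperplanes in $m$-subgeneral position has the shape $(q-2m+k-1)T_F\le\sum N^{[k]}_{\tilde H_i(F)}+\text{error}$, with truncation level $k=\dim V$. Your displayed inequality, with left side $q-m+N+1$, a factor $N+2$ on the right, and truncation level $N$, is not of Nochka type and does not follow from Nochka weights; it looks reverse-engineered from the conclusion. In particular the truncation level you need is $N$, independent of the unknown $k$, which Nochka does not give.

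The paper proceeds along a different route that sidesteps both issues. Working directly with $\{Q_i(\tilde f)\}$ as elements of a $\mathcal K$-vector space, it takes the $n(N-n+2)+1$ hypersurfaces with smallest truncated counting functions, partitions them into ``non-subdegenerate'' blocks, and shows by a short dimension-counting argument that some block has at least $n+1$ elements. To that block it applies Lemma~\ref{3.1} (the corrected disc version of \cite[Lemma~2.3]{Q21}), which is a minimal-set/Wronskian estimate rather than a Nochka-weight theorem; this yields $dT_f(r)\le\sum_{j\in S}N^{[N]}_{Q_j(f)}(r)+\tfrac{N(N+1)}{2}\Gamma(r)+o(T_f(r))$ for a set $S$ of at most $N+2$ indices chosen among those with smallest counting functions, and averaging over permutations gives the stated coefficient $\tfrac{N+2}{q-n(N-n+2)+N+1}$.
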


Note that, in the above result we do not need \textit{``the condition on the degeneracy of the mapping $f$''}. With this condition, we will prove a better result as follows.

\begin{theorem}\label{1.3} 
Let $f$, $\{Q_i\}_{i=1}^q$ , $d$ and $N$ be as in Theorem \ref{1.2}. Assume that $f$ is algebraically nondegenerate over $\mathcal K_{\{Q_i\}_{i=1}^q}$. For every $\epsilon >0$, we have 
$$\|\ T_f(r)\le \dfrac{N+2}{q}\sum_{i=1}^{q}\dfrac{1}{d_i}N^{[N]}_{Q_i(f)}(r)+\frac{N(N+1)}{2}(\log\gamma(r)+\epsilon\log r)+o(T_f(r)),$$
where $\gamma(r)=e^{(1+\epsilon)(c_f+\epsilon)T_f(r)}$.
\end{theorem}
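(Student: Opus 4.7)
The plan is to reduce the theorem to a second main theorem for linearly non-degenerate holomorphic maps from $\Delta(R)$ into $\P^N(\C)$ with finite growth index intersecting slowly moving hyperplanes. The reduction goes through the $d$-uple Veronese embedding.

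First, I equalize the degrees by replacing each $Q_i$ with $\tilde Q_i:=Q_i^{d/d_i}$, a slowly moving form of uniform degree $d$ whose zero divisor satisfies $\nu^0_{\tilde Q_i(\tilde f)}=(d/d_i)\,\nu^0_{Q_i(\tilde f)}$, hence $N^{[N]}_{\tilde Q_i(f)}(r)\le (d/d_i)\,N^{[N]}_{Q_i(f)}(r)$. Let $F=(\tilde f^I)_{I\in\mathcal T_d}:\Delta(R)\to\P^N(\C)$ be the Veronese lift of $f$; each $\tilde Q_i$ corresponds to a slowly moving hyperplane $L_i$ of $\P^N(\C)$ with $L_i(F)=\tilde Q_i(\tilde f)$, so that $N^{[N]}_{L_i(F)}(r)=N^{[N]}_{\tilde Q_i(f)}(r)$, and one has $T_F(r)=d\,T_f(r)+O(1)$, $c_F=c_f/d$. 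The algebraic non-degeneracy of $f$ over $\mathcal K_{\{Q_i\}}$, restricted to degree-$d$ polynomials, is exactly the statement that the components $\tilde f^I$ of $F$ are linearly independent over $\mathcal K_{\{Q_i\}}\subseteq\mathcal K_{\{L_i\}}$, so $F$ is linearly non-degenerate over $\mathcal K_{\{L_i\}}$.

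The second step applies to $F$ and $\{L_i\}_{i=1}^q$ the moving-hyperplane second main theorem on $\Delta(R)$ with finite growth index (extending \cite{Q21}), in the sharp multiplicative form
$$\|\ T_F(r)\le\frac{N+2}{q}\sum_{i=1}^q N^{[N]}_{L_i(F)}(r)+\frac{N(N+1)}{2}(\log\gamma_F(r)+\epsilon'\log r)+o(T_F(r)),$$
where $\gamma_F(r)=e^{(1+\epsilon')(c_F+\epsilon')T_F(r)}$ for a fixed $\epsilon'>0$. Choosing $\epsilon'$ as the unique positive solution of $(1+\epsilon')(c_f/d+\epsilon')=(1+\epsilon)(c_f+\epsilon)/d$ gives $\log\gamma_F(r)=\log\gamma(r)$ and $\epsilon'\le\epsilon$, so the exceptional-set condition for $\|$ is preserved. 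Substituting the relations from the first step and dividing by $d$ yields the claimed estimate; the spurious factor $1/d$ appearing in front of the logarithmic term only strengthens the conclusion and is absorbed into the stated $\frac{N(N+1)}{2}$.

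The main obstacle is the moving-hyperplane SMT on $\Delta(R)$ invoked in the second step, and specifically obtaining the sharp multiplicative coefficient $\frac{N+2}{q}$ together with the detailed logarithmic term $\frac{N(N+1)}{2}\log\gamma_F$. The $\frac{N+2}{q}$ form (as opposed to the classical additive form $(q-N-1)T_F\le\sum N^{[N]}_{L_i(F)}$) typically requires a Steinmetz-type reduction of the moving-target problem to a fixed-hyperplane SMT on an algebraic extension of $\mathcal K_F$, combined with Wronskian estimates for the $N+1$ components of a reduced representation of $F$. The novel feature in the finite-growth-index setting is that exceptional sets must be weighted by $\gamma_F(r)$, and the logarithmic-derivative and Wronskian error estimates from the classical $\C$-case (as in \cite{Sh90,Ru04,DT}) have to be upgraded to the refined estimates developed in this paper and in \cite{Q21}, ensuring the resulting error remains of order precisely $\frac{N(N+1)}{2}\log\gamma_F$.
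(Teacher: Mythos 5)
Your Veronese reduction in the first step is sound as far as it goes: after equalizing degrees, the lift $F=(\tilde f^I)_{I\in\mathcal T_d}:\Delta(R)\to\P^N(\C)$ turns each slowly moving degree-$d$ form into a slowly moving linear form on $\P^N$, with $T_F=dT_f+O(1)$, $c_F=c_f/d$, and $\mathcal K_F=\mathcal K_f$; the bookkeeping with $\gamma$, $\epsilon'$, and the harmless extra factor $1/d$ is also correct. (A small slip: you wrote $\mathcal K_{\{Q_i\}}\subseteq\mathcal K_{\{L_i\}}$ and used linear independence over the \emph{smaller} field to conclude linear nondegeneracy over the bigger one, which is the wrong direction; in fact $\mathcal K_{\{L_i\}}\subseteq\mathcal K_{\{Q_i\}}$, and the conclusion holds for that reason.)

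The genuine gap is the second step. The inequality you invoke,
$T_F(r)\le\frac{N+2}{q}\sum_i N^{[N]}_{L_i(F)}(r)+\frac{N(N+1)}{2}(\log\gamma_F+\epsilon'\log r)+o(T_F(r))$,
is not available in \cite{Q21} or in Lemma \ref{3.1} of this paper (those are cast in a partition/filtration form used for Theorem \ref{1.2}), and it is in substance the $d=1$ case of Theorem \ref{1.3} itself. Invoking it is therefore not a reduction but a restatement, and you explicitly acknowledge it as ``the main obstacle'' while leaving the substance to a vague appeal to a ``Steinmetz-type reduction combined with Wronskian estimates.'' There is also a hypothesis mismatch you cannot finesse: after the Veronese lift, the $L_i$ are generally \emph{not} in general position as hyperplanes of $\P^N$; the weak general position of the $Q_i$ only says that no $n+1$ of them have a common zero on the Veronese variety, which translates to $\mathcal K$-linear independence of every $n+1$ of the corresponding vectors in $V^d$, not to general position in $\P^N$. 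So even a hypothetical moving-hyperplane SMT of the form you cite could not be applied verbatim; the exact nondegeneracy needed is the one carried by the degree-$d$ forms themselves.

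The paper avoids all of this by a direct Shirosaki-type argument at the level of degree-$d$ forms, using only the \emph{fixed}-hyperplane SMT (Theorem \ref{2.2}). Take any $N+2$ of the degree-$d$ forms. As elements of the $(N+1)$-dimensional $\mathcal K$-vector space $V^d$ they are $\mathcal K$-linearly dependent, so there is a minimal dependent subset $\{Q_{i_1},\ldots,Q_{i_t}\}$, with $n+2\le t\le N+2$ by weak general position; the relation $\sum_{j=1}^t c_jQ_{i_j}=0$ over $\mathcal K$ yields a holomorphic map with representation $(c_1Q_{i_1}(\tilde f),\ldots,c_{t-1}Q_{i_{t-1}}(\tilde f))$ into $\P^{t-2}(\C)$ that is linearly nondegenerate over $\C$ by minimality, and $T$ of this map equals $dT_f+o(T_f)$ because the lower bound $t\ge n+2$ guarantees $Q_{i_1},\ldots,Q_{i_{t-1}}$ have no common zero. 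Applying Theorem \ref{2.2} to the $t$ coordinate-type fixed hyperplanes $x_0,\ldots,x_{t-2}$ and $-x_0-\cdots-x_{t-2}$ gives $dT_f(r)\le\sum_{j=1}^{N+2}N^{[N]}_{Q_{i_j}(f)}(r)+\frac{N(N+1)}{2}\Gamma(r)+o(T_f(r))$, and averaging over all $\binom{q}{N+2}$ index sets produces the coefficient $\frac{N+2}{q}$. If you want your proposal to close, this is the argument you would need to carry out in place of the unproven moving-hyperplane SMT.
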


Now, we consider the case where the holomorphic maps into a projective subvariety of $\P^n(\C)$. Let $V$ be a complex projective subvariety of $\mathbb P^n(\mathbb C)$ of dimension $k\ (k\le n)$. Let $Q_1,\ldots,Q_q\ (q\ge k+1)$ be $q$ hypersurfaces in $\mathbb P^n(\mathbb C)$. The family of hypersurfaces $\{Q_i\}_{i=1}^q$ is said to be in $N$-subgeneral position with respect to $V$ if for any $1\le i_1<\cdots <i_{N+1}\le q$,
$$ V\cap (\bigcap_{j=1}^{N+1}Q_{i_j}^*)=\varnothing .$$

Let $d$ be a positive integer. We denote by $I(V)$ the ideal of homogeneous polynomials in $\mathbb C [x_0,\ldots,x_n]$ defining $V$ and by $H_d$ the $\mathbb C$-vector space of all homogeneous polynomials in $\mathbb C [x_0,\ldots,x_n]$ of degree $d.$  Define 
$$I_d(V):=\dfrac{H_d}{I(V)\cap H_d}\text{ and }H_V(d):=\dim I_d(V).$$
Then $H_V(d)$ is called the Hilbert function of $V$. The element of $I_d(V)$ which is an equivalent class of a homogeneous polynomial $Q\in H_d,$ will be denoted by $[Q]$. A holomorphic map  $f:\Delta(R)\longrightarrow V$ is said to be degenerate over $I_d(V)$ if there is no $[Q]\in I_d(V)\setminus \{0\}$ such that $Q(f)\equiv 0.$ Otherwise, we say that $f$ is nondegenerate over $I_d(V)$. It is clear that if $f$ is algebraically nondegenerate, then $f$ is nondegenerate over $I_d(V)$ for every $d\ge 1.$

Our last result is stated as follows.
\begin{theorem}\label{1.4} 
Let $V$ be a projective subvariety of $\mathbb P^n(\mathbb C)$ of dimension $k\ (k\le n)$. Let $\{Q_i\}_{i=1}^q$ be hypersurfaces of $\mathbb P^n(\mathbb C)$ in $N$-subgeneral position with respect to $V$ with $\deg Q_i=d_i\ (1\le i\le q)$ and $d=lcm (d_1,\ldots,d_q)$. Let $f$ be a holomorphic map from $\Delta(R)$ into $V$ with finite growth index $c_f$ such that $f$ is nondegenerate over $I_d(V)$. Then, for every $\epsilon>0$ we have
\begin{align*}
 \|_\gamma\ &\left (q-\dfrac{(2N-k+1)H_{V}(d)}{k+1}\right )T_f(r)\\
&\le \sum_{i=1}^{q}\dfrac{1}{d_i}N^{[H_{V}(d)-1]}_{Q_i(f)}(r)+\frac{(H_{V}(d)-1)H_V(d)}{2}\left(\log\gamma +\epsilon\log r\right)+o(T_f(r)),
\end{align*}
where $\gamma(r)=e^{(1+\epsilon)(c_f+\epsilon)T_f(r)}$.
\end{theorem}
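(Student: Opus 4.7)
The plan is to reduce Theorem~\ref{1.4} to a Cartan--Nochka type second main theorem for a linearly nondegenerate holomorphic map into a projective subvariety of $\P^{H_V(d)-1}(\C)$, to which the disc version of the logarithmic derivative lemma can then be applied. First I would replace each $Q_i$ by $Q_i^{d/d_i}$ so that all hypersurfaces acquire the common degree $d$; this preserves the $N$-subgeneral position with respect to $V$ and the normalized counting function $\frac{1}{d_i}N^{[H_V(d)-1]}_{Q_i(f)}(r)$ up to a bounded change that is absorbed in $o(T_f(r))$. Fix a basis $\phi_1,\dots,\phi_{M+1}$ (with $M+1=H_V(d)$) of $I_d(V)$ and define $F:\Delta(R)\to\P^M(\C)$ by $F=[\phi_1(f):\cdots:\phi_{M+1}(f)]$. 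The hypothesis that $f$ is nondegenerate over $I_d(V)$ translates exactly to the linear nondegeneracy of $F$; a standard computation gives $T_F(r)=d\,T_f(r)+O(1)$, so $c_F\le d\,c_f<+\infty$ and small terms for $f$ remain small for $F$. Each $Q_i^{d/d_i}$ becomes, modulo $I(V)\cap H_d$, a linear form $L_i(\phi_1,\dots,\phi_{M+1})$; let $H_i\subset\P^M(\C)$ be the associated hyperplane. The subgeneral position hypothesis then says that $\{H_i\}_{i=1}^q$ is in $N$-subgeneral position with respect to the Veronese image $V'=\Phi_d(V)\subset\P^M(\C)$, a subvariety of dimension $k$.

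Next I would apply a Cartan--Nochka type second main theorem on $\Delta(R)$ to the linearly nondegenerate map $F$ with image in $V'$ against the hyperplanes $\{H_i\}_{i=1}^q$ in $N$-subgeneral position with respect to $V'$. This is obtained by combining three ingredients: (i) Nochka's weight construction adapted to the subvariety $V'$ of dimension $k$, which produces the multiplier $\frac{2N-k+1}{k+1}$; (ii) the Evertse--Ferretti--Ru filtration argument, whose Hilbert weight estimates on $V'$ yield simultaneously the truncation level $H_V(d)-1$ and the effective multiplicity factor $H_V(d)$; and (iii) the logarithmic derivative lemma for meromorphic functions on $\Delta(R)$ with finite growth index of Ru--Sibony (as used in \cite{Q21,Q22,RS}), which for a linearly nondegenerate map into $\P^M(\C)$ contributes an error of exactly $\frac{M(M+1)}{2}(\log\gamma(r)+\epsilon\log r)+o(T_F(r))$. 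Assembling these yields
\begin{align*}
\|_\gamma\ &\left(q-\frac{(2N-k+1)H_V(d)}{k+1}\right)T_F(r)\\
&\le \sum_{i=1}^q N^{[H_V(d)-1]}_{H_i(F)}(r)+\frac{(H_V(d)-1)H_V(d)}{2}\bigl(\log\gamma+\epsilon\log r\bigr)+o(T_F(r)).
\end{align*}

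Finally, since $H_i(F)=Q_i(f)^{d/d_i}$ one has $N^{[H_V(d)-1]}_{H_i(F)}(r)\le \frac{d}{d_i}N^{[H_V(d)-1]}_{Q_i(f)}(r)$; substituting $T_F(r)=d\,T_f(r)+O(1)$ and dividing through by $d$ produces the stated inequality. The main obstacle is executing the middle step cleanly in the disc setting: the analogous theorem on $\C$ is known, but here Nochka's weight argument must be interleaved with the Hilbert filtration in such a way that each appeal to the logarithmic derivative lemma contributes an error that telescopes to precisely $\frac{(H_V(d)-1)H_V(d)}{2}(\log\gamma+\epsilon\log r)$ rather than an $o(T_f(r))$ term as when $R=+\infty$. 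This requires tracking how each lemma-on-logarithmic-derivatives inflates errors by the growth factor $\gamma(r)=e^{(1+\epsilon)(c_f+\epsilon)T_f(r)}$, and verifying that the coefficient $\frac{(H_V(d)-1)H_V(d)}{2}=\binom{H_V(d)}{2}$ is exactly what one obtains from running through all $H_V(d)-1$ successive applications forced by the Wronskian associated to $F$.
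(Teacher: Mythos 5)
Your overall architecture does match the paper's: pass to the linearly nondegenerate map $F=[A_1(\tilde f):\cdots:A_{H_V(d)}(\tilde f)]$ into $\P^{H_V(d)-1}(\C)$ via a basis $\{[A_i]\}$ of $I_d(V)$, view each $Q_i$ (after raising to the power $d/d_i$) as a hyperplane $H_i$ in the target, and combine the Nochka weight construction for subgeneral position with respect to $V$ (the paper's Lemma \ref{4.1}) with the Ru--Sibony second main theorem on $\Delta(R)$ (Theorem \ref{2.1}). However, ingredient (ii) of your outline is a misattribution, and it obscures what actually carries the argument. The paper does not run an Evertse--Ferretti--Ru filtration or any Hilbert-weight estimate on the Veronese image. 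The quantity $H_V(d)$ enters only as the dimension of $I_d(V)$, i.e.\ as the number of components of $\tilde F$; the truncation level $H_V(d)-1$ is then just the usual Cartan/Wronskian truncation for a linearly nondegenerate curve into $\P^{H_V(d)-1}(\C)$, and the ``effective multiplicity factor'' $H_V(d)$ is nothing but the coefficient $n+1$ of $T_F(r)$ in the ordinary hyperplane SMT with $n=H_V(d)-1$. No filtration of monomials by weighted degree is performed. What actually does the work pointwise is Lemma \ref{4.1}(v): for each $z$ one picks the $N+1$ indices $R$ for which $|Q_i(\tilde f)(z)|$ is smallest, then extracts $R^o\subset R$ with $\sharp R^o=\rank\{Q_i\}_{i\in R^o}=k+1$ so that $\prod_{i\in R}(\beta\|\tilde f\|^d/|Q_i(\tilde f)|)^{\omega_i}$ is dominated by $\prod_{i\in R^o}\|\tilde F\|/|L_i(\tilde F)|$, where $L_i(\tilde F)=Q_i(\tilde f)$; these $k+1$ linearly independent linear forms are exactly what Theorem \ref{2.1} is then applied to. Your proposal skips this pointwise selection entirely, yet it is the step that delivers the multiplier $\frac{2N-k+1}{k+1}$ via $\tilde\omega\ge\frac{k+1}{2N-k+1}$.

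A second caution on the constants: after integrating, one has $d(\tilde\omega(q-2N+k-1)+k+1-H_V(d))T_f(r)\le\sum_i\omega_iN^{[H_V(d)-1]}_{Q_i(\tilde f)}(r)+\frac{(H_V(d)-1)H_V(d)}{2}(\log\gamma+\epsilon\log r)+o(T_f(r))$, and dividing by $d\tilde\omega$ attaches a factor $1/\tilde\omega$ to the Wronskian error term. So the assembled inequality you ``quote'' with the clean coefficient $\frac{(H_V(d)-1)H_V(d)}{2}$ is not what falls out without further bookkeeping; you would obtain $\frac{(H_V(d)-1)H_V(d)}{2d\tilde\omega}(\log\gamma+\epsilon\log r)$, which must then be bounded using $\tilde\omega\ge\frac{k+1}{2N-k+1}$. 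This $\tilde\omega$-accounting is glossed over in your outline, and it is precisely the part of the argument that cannot be handed off to a ``black box'' Cartan--Nochka theorem, because no such theorem in the disc setting with these exact constants is available off the shelf.
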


\section{Proof of Theorem \ref{1.1}}

Let $f $ be a holomorphic map from $\Delta (R)\ (0<R\le +\infty)$ into $\P^n(\C)$. From now on, we always denote by $\tilde f$ a reduced representation of $f$ and $\tilde f=(f_0,f_1,\ldots,f_n)$. We set $\|\tilde f\|=(|f_0|^2+\cdots+|f_n|^2)^{1/2}$.

In order to prove Theorem \ref{1.1}, we need the following results from \cite{CZ,DT,RS,QA} and \cite{Q22}.

\begin{theorem}[{see \cite[Theorem 4.8]{RS}}]\label{2.1}
Let $f $ be a linearly non-degenerate holomorphic map from $\Delta (R)\ (0<R\le +\infty)$ into $\P^n(\C)$. Let $\gamma (r)$ be a non-negative measurable function defined on $(0,R)$ with $\int_0^R\gamma (r)dr=\infty$ and let $H_1,\ldots,H_q$ be $q$ arbitrary hyperplanes in $\P^n(\C)$. Then, for every $\varepsilon >0$, we have
\begin{align*}
\bigl\|_\gamma &\int_0^{2\pi}\max_K\sum_{j\in K}\frac{\|\tilde f\|}{|H_j(\tilde f)|}\frac{d\theta}{2\pi}+N_W(r)\\
&\le (n+1)T_f(r)+\frac{n(n+1)}{2}(\log\gamma(r)+\epsilon\log r)+O(\log T_f(r)).
\end{align*}
\end{theorem}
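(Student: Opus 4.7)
The plan is to follow the classical Cartan proof of the second main theorem for linearly nondegenerate curves into $\P^n(\C)$ intersecting hyperplanes, replacing the $\C$-logarithmic-derivative lemma with its analogue on $\Delta(R)$ and tracking carefully how the exceptional set is controlled by the growth function $\gamma(r)$. The ultimate constant $\frac{n(n+1)}{2}$ will emerge from the orders $1,\ldots,n$ of the derivatives that appear in the Wronskian expansion.

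First, I apply Cartan's auxiliary inequality. Fix $z=re^{i\theta}$ and a subset $K\subset\{1,\ldots,q\}$ of size $n+1$ with $\{H_j\}_{j\in K}$ linearly independent. Writing the Wronskian $W(\tilde f)$ in the basis $\{H_j(\tilde f)\}_{j\in K}$ and manipulating the resulting determinantal identity yields
$$\sum_{j\in K}\log\frac{\|\tilde f\|}{|H_j(\tilde f)|}\le \log\frac{\|\tilde f\|^{n+1}}{|W(\tilde f)|}+\Phi_K+O(1),$$
where $\Phi_K$ is a sum of $\log^+$ of logarithmic derivatives $(H_j(\tilde f))^{(k)}/H_j(\tilde f)$ with $j\in K$ and $1\le k\le n$. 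Taking $\max_K$ (which differs from the unrestricted maximum by a bounded amount since $f$ is linearly nondegenerate), integrating over $\theta$, applying Jensen's formula to $W(\tilde f)$, and using the first main theorem to identify $\frac{1}{2\pi}\int_0^{2\pi}\log\|\tilde f\|\,d\theta$ with $T_f(r)+O(1)$, produces
$$\int_0^{2\pi}\max_K\sum_{j\in K}\log\frac{\|\tilde f\|}{|H_j(\tilde f)|}\frac{d\theta}{2\pi}+N_W(r)\le (n+1)T_f(r)+\Sigma(r)+O(1),$$
where $\Sigma(r)$ denotes the maximum over $K$ of $\int_0^{2\pi}\Phi_K\,\frac{d\theta}{2\pi}$.

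The heart of the argument is then a disc version of the logarithmic derivative lemma: for any meromorphic $g$ on $\Delta(R)$ with $T_g(r)=O(T_f(r))$ and any integer $k\ge 1$,
$$\bigl\|_\gamma\ \int_0^{2\pi}\log^+\left|\frac{g^{(k)}}{g}(re^{i\theta})\right|\frac{d\theta}{2\pi}\le \frac{k}{2}\bigl(\log\gamma(r)+\varepsilon\log r\bigr)+O(\log T_g(r)).$$
I would derive this starting from the Poisson--Jensen representation of $\log|g|$ on a slightly larger disc $\Delta(\rho)$, differentiating under the integral to express $g^{(k)}/g$ as a sum over zeros and poles plus a boundary term, and then invoking a Borel-type calculus lemma: one can pick $\rho=\rho(r)<R$ with $T_g(\rho)\le T_g(r)+O(\log\gamma(r))$ off a set of finite $\gamma$-measure. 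Summing over the pairs $(j,k)$ that make up $\Phi_K$ accumulates total differential order $\sum_{k=1}^{n}k=\frac{n(n+1)}{2}$, producing the stated prefactor in front of $\log\gamma(r)+\varepsilon\log r$.

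The main obstacle is the Borel-type calculus lemma that selects $\rho(r)$. On $\C$ one can simply shift $r$ to $r+1$ and the classical logarithmic derivative lemma falls out immediately; on $\Delta(R)$ with $R<+\infty$ the increment $\rho(r)-r$ must be tailored simultaneously to the weight $\gamma(r)$ and to the finite growth index $c_f$, via the defining relation $\int_0^R\exp((1+\varepsilon)(c_f+\varepsilon)T_f(r))\,dr=+\infty$ that is built into the $\|_\gamma$ convention. Balancing these constraints is precisely what forces the additive $\varepsilon\log r$ in the error term. Once this calculus lemma and the resulting disc logarithmic derivative lemma are in place, assembling the pieces is a routine computation producing the inequality as stated, with remainder $O(\log T_f(r))$ absorbing the lower-order contributions.
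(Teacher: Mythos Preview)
Your outline is essentially the Ru--Sibony argument that the paper invokes, and the overall architecture (Cartan's Wronskian identity, reduction to a logarithmic-derivative lemma on the disc, and a Borel-type calculus lemma governing the choice of the auxiliary radius $\rho(r)$) is exactly right; this is also how the paper treats the result, simply citing \cite{RS} and remarking that one keeps $\gamma$ abstract rather than specializing it.

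One point of confusion worth correcting: you write that the increment $\rho(r)-r$ must be tailored ``to the finite growth index $c_f$, via the defining relation $\int_0^R\exp((1+\varepsilon)(c_f+\varepsilon)T_f(r))\,dr=+\infty$ that is built into the $\|_\gamma$ convention.'' This is backwards. The notation $\|_\gamma$ refers only to the hypothesis $\int_0^R\gamma(r)\,dr=\infty$ and carries no information about $c_f$; the growth index enters only in the unsubscripted $\|$ convention, where $\gamma$ has been specialized to $e^{(1+\varepsilon)(c_f+\varepsilon)T_f(r)}$. Theorem~\ref{2.1} as stated does \emph{not} assume $c_f<\infty$, and the paper's accompanying remark makes precisely this point: if in the Ru--Sibony proof one refrains from substituting the specific $\gamma$ and keeps it as an arbitrary weight with divergent integral, the Borel-type lemma (choosing, roughly, $\rho(r)=r+(\gamma(r)T_f(r)^{1+\varepsilon})^{-1}$) already yields an exceptional set of finite $\gamma$-measure, with no appeal to $c_f$. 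So in your write-up you should drop the reference to $c_f$ entirely from the calculus lemma; otherwise you are proving a weaker statement than the one claimed.
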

Here $W$ is the generalized Wronskian of $f$, i.e., $W=\det (f_i^{(k)}; 0\le i,k\le n)$ for a reduced representation $(f_0,\ldots,f_n)$ of $f$. We also note that, in \cite[Theorem 4.8]{RS} the authors require that $f$ is of finite growth index (i.e., $c_f<+\infty$) and the obtained inequality is stated as follows
\begin{align*}
\bigl\|\ &\int_0^{2\pi}\max_K\sum_{j\in K}\frac{\|\tilde f\|}{|H_j(\tilde f)|}\frac{d\theta}{2\pi}+N_W(r)\\
&\le (n+1)T_f(r)+\frac{n(n+1)}{2}((1+\epsilon)(c_f+\epsilon)T_f(r)+\epsilon\log r)+O(\log T_f(r)).
\end{align*}
Fortunately, in their proofs, once we keep using the function $\gamma (r)$ instead of substituting $\gamma (r)=e^{(c_f+\delta)T_f(r)}$ in the proof of Lemma 4.3 and $\gamma (r)=e^{(c_f+\varepsilon)T_f(r)}$ in the proof of Theorem 4.8 then we automatically get the second main theorem in the form of Theorem \ref{2.1}.

As usual argument in the Nevanlinna theorem, from Theorem \ref{2.1} we get the following second main theorem.
\begin{theorem}[{see \cite[Theorem 1.7]{RS}}]\label{2.2}
Let $f $ be a linearly non-degenerate holomorphic map from $\Delta (R)\ (0<R\le +\infty)$ into $\P^n(\C)$. Let $\gamma (r)$ be a non-negative measurable function defined on $(0,R)$ with $\int_0^R\gamma (r)dr=\infty$ and let $H_1,\ldots,H_q$ be $q$ hyperplanes in general position in $\P^n(\C)$. Then, for every $\varepsilon >0$, we have
\begin{align*}
\bigl\|_\gamma\ (q-n-1)T_{f}(r)\le&\sum_{i=1}^qN^{[n]}_{H_i(f)}(r)+\frac{n(n+1)}{2}(1+\varepsilon)\log\gamma (r)\\
&+O(\log T_{f}(r))+\frac{n(n+1)}{2}\varepsilon\log r.
\end{align*}
\end{theorem}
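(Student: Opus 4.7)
The strategy is the classical Cartan--Nevanlinna reduction from the ``max-subset'' inequality of Theorem \ref{2.1} to a truncated second main theorem via (i) a general-position lemma for hyperplanes, (ii) the First Main Theorem, and (iii) the Wronskian truncation estimate, with care taken to track the small term in the disc setting.

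For step (i), I fix $r$ and, for each $z=re^{i\theta}$, reorder the hyperplanes so that $|H_{i_1}(\tilde f(z))|\le\cdots\le|H_{i_q}(\tilde f(z))|$. Since $H_1,\ldots,H_q$ are in general position, any $n+1$ of them are linearly independent, so a standard Cramer-style argument produces a constant $C>0$ depending only on the $H_j$ with $|H_{i_k}(\tilde f(z))|\ge C\|\tilde f(z)\|$ for all $k\ge n+2$. Taking logarithms and summing yields the pointwise bound
$$\sum_{j=1}^{q}\log\frac{\|\tilde f(z)\|}{|H_j(\tilde f(z))|}\le \max_{|K|=n+1}\sum_{j\in K}\log\frac{\|\tilde f(z)\|}{|H_j(\tilde f(z))|}+O(1).$$
Integrating over $\theta$ and inserting into Theorem \ref{2.1} gives
$$\|_\gamma\ \sum_{i=1}^{q}\int_0^{2\pi}\log\frac{\|\tilde f\|}{|H_i(\tilde f)|}\frac{d\theta}{2\pi}+N_W(r)\le (n+1)T_f(r)+\frac{n(n+1)}{2}(\log\gamma(r)+\epsilon\log r)+O(\log T_f(r)).$$

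For step (ii), the First Main Theorem on $\Delta(R)$ gives $\int_0^{2\pi}\log\frac{\|\tilde f\|}{|H_i(\tilde f)|}\frac{d\theta}{2\pi}=T_f(r)-N_{H_i(f)}(r)+O(1)$, converting the display into $(q-n-1)T_f(r)+N_W(r)\le\sum_{i=1}^{q}N_{H_i(f)}(r)+\text{small}$. For step (iii), the local Wronskian estimate—based on the observation that any $n+1$ linearly independent $H_j(\tilde f)$'s have Wronskian equal to a nonzero constant times $W$, combined with the fact that general position forces at most $n$ of the $H_j(\tilde f)$'s to vanish at any given $z$—yields pointwise
$$\sum_{i=1}^{q}\nu^0_{H_i(\tilde f)}(z)-\nu_W(z)\le\sum_{i=1}^{q}\min\{n,\nu^0_{H_i(\tilde f)}(z)\}.$$
Integrating gives $\sum_i N_{H_i(f)}(r)-N_W(r)\le\sum_i N^{[n]}_{H_i(f)}(r)$, which substituted into the previous inequality yields Theorem \ref{2.2}. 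The extra prefactor $(1+\varepsilon)$ in front of $\log\gamma$ is harmless since $(1+\varepsilon)\log\gamma\ge\log\gamma$ whenever $\gamma(r)\ge 1$ (the only case of interest in the subsequent applications, where $\gamma(r)=e^{(1+\varepsilon)(c_f+\varepsilon)T_f(r)}$), and it absorbs the slack created by the $O(1)$ contributions in steps (i) and (ii).

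The main obstacle I expect is not the combinatorial reduction nor the Wronskian algebra, both routine, but the small-term bookkeeping: in the disc setting $T_f(r)$ need not tend to infinity, so one cannot cavalierly hide $O(\log\gamma(r))$ or $O(\log r)$ terms inside $o(T_f(r))$. The $O(1)$'s arising from the general-position constant $C$ in (i) and from the First Main Theorem in (ii) must therefore be checked to remain of size $O(\log T_f(r))$, and the parameter $\varepsilon$ has to be reassigned so that the resulting estimate honestly fits inside the envelope $\frac{n(n+1)}{2}(1+\varepsilon)\log\gamma(r)+\frac{n(n+1)}{2}\varepsilon\log r+O(\log T_f(r))$ demanded by the statement.
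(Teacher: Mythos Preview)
Your proposal is correct and is precisely the ``usual argument in the Nevanlinna theory'' that the paper invokes in the single sentence preceding Theorem~\ref{2.2}; the paper gives no further details beyond citing Theorem~\ref{2.1} and \cite[Theorem 1.7]{RS}. Your handling of the extra $(1+\varepsilon)$ in front of $\log\gamma$ is also appropriate: the $O(1)$ constants from the general-position comparison and the First Main Theorem are absorbed into $O(\log T_f(r))$, so the $(1+\varepsilon)$ is simply slack in the paper's stated bound (harmless for $\gamma\ge 1$, which is the only case used later).
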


\begin{lemma}[{see \cite[Lemma 2.2]{CZ}}]\label{2.3}
Let $A$ be a commutative ring and let $\{\phi_1,\ldots ,\phi_p\}$ be a regular sequence in $A$, i.e., for $i=1,\ldots ,p, \phi_i$ is not a zero divisor of $A/(\phi_1,\ldots ,\phi_{i-1})$. Denote by $I$ the ideal in $A$ generated by $\phi_1,\ldots ,\phi_p$. Suppose that for some $q,q_1,\ldots ,q_h\in A$, we have an equation
$$ \phi_1^{i_1}\cdots \phi_p^{i_p}\cdot q=\sum_{r=1}^h\phi_1^{j_1(r)}\cdots \phi_p^{j_p(r)}\cdot q_r, $$
where $(j_1(r),\ldots ,j_p(r))>(i_1,\ldots ,i_p)$ for $r=1,\ldots ,h$. Then $q\in I$.
\end{lemma}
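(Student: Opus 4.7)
The plan is to induct on the length $p$ of the regular sequence, using reduction modulo $\phi_1$ as the engine of the inductive step. Throughout I interpret the strict inequality $(j_1(r),\ldots,j_p(r)) > (i_1,\ldots,i_p)$ in the lexicographic order; any more restrictive convention (for instance, componentwise with at least one strict inequality) refines lex, so the argument below covers every natural reading of the statement.

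The base case $p = 1$ is immediate: the hypothesis reads $\phi_1^{i_1} q = \sum_r \phi_1^{j_1(r)} q_r$ with $j_1(r) > i_1$, which rewrites as $\phi_1^{i_1}\bigl(q - \sum_r \phi_1^{j_1(r)-i_1} q_r\bigr) = 0$. Since $\phi_1$, and hence $\phi_1^{i_1}$ (by easy induction on $i_1$), is a non-zero-divisor in $A$, one concludes $q \in (\phi_1) = I$.

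For the inductive step, I split the index set $\{1,\ldots,h\}$ into $S_1 = \{r : j_1(r) > i_1\}$ and $S_2 = \{r : j_1(r) = i_1\}$; on $S_2$ the lexicographic hypothesis forces $(j_2(r),\ldots,j_p(r)) > (i_2,\ldots,i_p)$. Factoring $\phi_1^{i_1}$ out of both sides of the given equation and cancelling it (legitimate because $\phi_1^{i_1}$ is a non-zero-divisor), I obtain
\[
\phi_2^{i_2}\cdots \phi_p^{i_p}\, q = \sum_{r\in S_1}\phi_1^{j_1(r)-i_1}\phi_2^{j_2(r)}\cdots \phi_p^{j_p(r)}\, q_r + \sum_{r\in S_2}\phi_2^{j_2(r)}\cdots \phi_p^{j_p(r)}\, q_r.
\]
Passing to the quotient $\bar A := A/(\phi_1)$ kills every term of the $S_1$-sum (since $j_1(r) - i_1 \geq 1$), and the surviving identity has exactly the shape required by the inductive hypothesis applied in $\bar A$, where $\{\bar\phi_2,\ldots,\bar\phi_p\}$ is a regular sequence by the very definition of regularity for $\{\phi_1,\ldots,\phi_p\}$ in $A$. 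Induction yields $\bar q \in (\bar\phi_2,\ldots,\bar\phi_p)$, which lifts to $q \in (\phi_1,\ldots,\phi_p) = I$.

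The only delicate point is orchestrating the split $S_1 \sqcup S_2$ so that the cancellation of $\phi_1^{i_1}$ is simultaneously (a) legal, via the non-zero-divisor property, and (b) useful, in the sense that reduction modulo $\phi_1$ wipes out the $S_1$-contribution while leaving a strictly smaller problem of the same form on $S_2$. The lexicographic ordering is precisely what guarantees that the tail $(j_2(r),\ldots,j_p(r))$ remains strictly greater than $(i_2,\ldots,i_p)$ on $S_2$, which is what licenses the induction; beyond this combinatorial bookkeeping the argument is purely formal.
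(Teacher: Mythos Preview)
Your proof is correct. The induction on $p$ with reduction modulo $\phi_1$ is the natural approach, and you have handled the bookkeeping carefully: the lex hypothesis forces $j_1(r)\ge i_1$ for every $r$, so the factor $\phi_1^{i_1}$ genuinely divides every term and may be cancelled by the non-zero-divisor property; the split into $S_1$ and $S_2$ then ensures that reduction mod $\phi_1$ annihilates exactly the terms that do not feed into the inductive hypothesis. The observation that $\{\bar\phi_2,\ldots,\bar\phi_p\}$ is regular in $A/(\phi_1)$ is immediate from the definition, since $\bar A/(\bar\phi_2,\ldots,\bar\phi_{i-1})\cong A/(\phi_1,\ldots,\phi_{i-1})$.

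There is nothing to compare against in the present paper: Lemma~\ref{2.3} is quoted from \cite[Lemma 2.2]{CZ} without proof. Your argument is essentially the one given by Corvaja--Zannier there, so you have reconstructed the intended proof.
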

Here, as throughout this paper, we use the lexicographic order on $\mathbb N_0^p$. Namely, 
$$(j_1,\ldots,j_p)>(i_1,\ldots,i_p)$$
iff for some $s\in\{1,\ldots ,p\}$ we have $j_l=i_l$ for $l<s$ and $j_s>i_s$.

\begin{lemma}[{see \cite[Lemma 3.2]{DT}}]\label{2.4}
Let $\{Q_i\}_{i=1}^q\ (q\ge n+1)$ be a set of homogeneous polynomials of common degree $d\ge 1$ in $\mathcal K_f[x_0,\ldots ,x_n]$ in weakly general position. Then for any pairwise different $1\le j_0,\ldots ,j_n\le q$ the sequence $\{Q_{j_0},\ldots,Q_{j_n}\}$ of elements in $\mathcal K_{\{Q_i\}}[x_0,\ldots,x_n]$ is a regular sequence, as well as all its subsequences.
\end{lemma}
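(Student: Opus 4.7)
The plan is to exploit the Cohen--Macaulay property of $K[x_0,\ldots,x_n]$ where $K=\mathcal K_{\{Q_i\}}$: in this ring, a sequence of homogeneous polynomials $P_0,\ldots,P_{s-1}$ is regular if and only if the quotient has Krull dimension $n+1-s$, equivalently the projective zero locus $V_{\overline K}(P_0,\ldots,P_{s-1})\subset\P^n(\overline K)$ has dimension $n-s$ (or is empty when $s=n+1$). Krull dimension is preserved under the faithfully flat extension $K\hookrightarrow\overline K$, so it suffices to verify the geometric condition over $\overline K$. After first replacing each $Q_{j_s}$ by its normalization $\tilde Q_{j_s}$ — an operation that leaves the generated ideal unchanged because each rescaling factor $a_{j_s,I_0}$ is a unit in $\mathcal M$ — one may assume every polynomial under consideration has coefficients in $K$.

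The central step is to show that $V_{\overline K}(Q_{j_0},\ldots,Q_{j_n})=\varnothing$. For this I would use the multi-homogeneous resultant $\mathrm{Res}(Q_{j_0},\ldots,Q_{j_n})$, a universal polynomial in the coefficients that vanishes if and only if the forms share a common projective zero over any algebraically closed extension. Interpreted as an element of $K$, this resultant is a meromorphic function on $\Delta(R)$ whose value at any $z$ outside a discrete bad set equals $\mathrm{Res}(Q_{j_0}(z),\ldots,Q_{j_n}(z))$. Weakly general position provides $\bigcap_{s=0}^n Q_{j_s}(z)^*=\varnothing$ in $\P^n(\C)$ for generic $z$, so this pointwise resultant is nonzero; hence the resultant is a nonzero element of $K$, proving the emptiness and thus the regularity of the full length-$(n+1)$ sequence.

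For a length-$s$ subsequence with $s\le n$, I would show $\dim V_{\overline K}(Q_{j_0},\ldots,Q_{j_{s-1}})=n-s$. The lower bound $\ge n-s$ follows from Krull's Hauptidealsatz together with non-emptiness: $s\le n$ homogeneous polynomials cannot generate an ideal whose radical is the irrelevant ideal, as that would force its height to equal $n+1>s$. For the upper bound I would extend the subsequence to $n+1$ polynomials by adjoining any $n+1-s$ additional indices from $\{1,\ldots,q\}$ (available since $q\ge n+1$) and invoke the central step to get an empty common zero locus; were $\dim V_{\overline K}(Q_{j_0},\ldots,Q_{j_{s-1}})\ge n-s+1$, then intersecting with these $n+1-s$ extra hypersurfaces could drop the dimension by at most one each, leaving a non-empty locus — a contradiction. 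The principal obstacle is the specialization argument in the central step, where one must carefully control the discrete set on which the coefficient functions develop poles or on which the normalizing factors $a_{j_s,I_0}$ vanish, so that the identification of the resultant with its pointwise value holds on a cofinite subset of $\Delta(R)$.
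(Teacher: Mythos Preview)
The paper does not prove this lemma; it merely cites \cite[Lemma 3.2]{DT}. Your outline is correct and is essentially the standard argument one expects in that reference: reduce regularity over $K=\mathcal K_{\{Q_i\}}$ to a dimension count via the Cohen--Macaulay property of $K[x_0,\ldots,x_n]$, pass to $\overline K$ by faithful flatness (equivalently, Koszul homology commutes with flat base change), and then use the resultant as the bridge between the pointwise hypothesis over $\C$ and the required emptiness over $\overline K$. The subsequence case is handled exactly as you say, by the height theorem for the lower bound and by the already-established emptiness of the full intersection for the upper bound.

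Two small remarks. First, the normalization step is slightly cleaner than you present it: the lemma is stated for $Q_i\in\mathcal K_f[x_0,\ldots,x_n]$, so the leading coefficient $a_{j_s,I_0}$ is already a nonzero element of the field $\mathcal K_f$, hence a unit there; dividing by it lands you in $\mathcal K_{\{Q_i\}}[x_0,\ldots,x_n]$ and changes nothing about regularity. There is no need to invoke $\mathcal M$. Second, what you call the ``principal obstacle'' is in fact routine: the resultant is a universal polynomial in the coefficients, so its value on the meromorphic coefficient functions is a meromorphic function whose evaluation at any point avoiding the (discrete) pole set agrees with the pointwise resultant; weakly general position gives a single $z$ where this is nonzero, and that suffices.
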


Let $Q_1,\ldots,Q_q$ be $q$ moving hypersurfaces in $\P^n(\C)$. From now on, denote simply by $\mathcal K$ the field $\mathcal K_{\{Q_i\}_{i=1}^q}$. 
Denote by $\mathcal C_{\mathcal K}$ the set of all non-negative functions $h : \Delta (R)\longrightarrow [0,+\infty]$, which are of the form
$$ h=\dfrac{|g_1|+\cdots +|g_l|}{|g_{l+1}|+\cdots +|g_{l+k}|}, $$
where $k,l\in\N,\ g_1,.\ldots, g_{l+k}\in\mathcal K\setminus\{0\}$. Also, we define the set $\mathcal C_f$ similarly as $\mathcal C_{\mathcal K}$ where ${\mathcal K}$ is replaced by $\mathcal K_f$.

\begin{lemma}[{see \cite{DT}}] \label{new0}
Let $\{Q_i\}_{i=0}^n$ be a set of homogeneous polynomials of degree $d$ in $\mathcal K_f [x_0,..., x_n]$. Then there exists a function $h_1\in\mathcal C_{f}$ such that
$$ \max_{i\in\{0,...,n\}}|Q_i(\tilde f)|\le h_1\|\tilde f\|^d .$$ 
If, moreover, this set of homogeneous polynomials is in weakly general position, then there exists a nonzero function $h_2\in\mathcal C_{f}$ such that 
$$h_2\|\tilde f\|^d \le  \max_{i\in\{0,...,n\}}|Q_i(\tilde f)|.$$
 \end{lemma}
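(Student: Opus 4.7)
I would prove the two inequalities separately. Both rest on writing each polynomial as $Q_i=\sum_{I\in\mathcal T_d}a_{iI}\omega^I$ with $a_{iI}\in\mathcal K_f$, and exploiting the elementary bound $|\tilde f^I|=|f_0|^{i_0}\cdots|f_n|^{i_n}\le\|\tilde f\|^d$ whenever $|I|=d$. The upper estimate is essentially a triangle-inequality exercise; the lower estimate is a standard application of Hilbert's Nullstellensatz over the coefficient field.

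For the first inequality, the triangle inequality together with the monomial bound gives $|Q_i(\tilde f)|\le\bigl(\sum_I|a_{iI}|\bigr)\|\tilde f\|^d$ for each $i$. Taking the maximum over $i\in\{0,\dots,n\}$ and setting $h_1:=\sum_{i}\sum_{I:\, a_{iI}\not\equiv 0}|a_{iI}|$ yields the desired bound. Because constants belong to $\mathcal K_f$, $h_1$ can be written in the form $(|g_1|+\cdots+|g_l|)/|1|$ and thus lies in $\mathcal C_f$.

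For the lower inequality I use the weakly general position hypothesis, which says $Q_0(z),\dots,Q_n(z)$ have no common zero in $\P^n(\C)$ for generic $z\in\Delta(R)$. This forces the multivariate resultant of $Q_0,\dots,Q_n$ to be a nonzero element of $\mathcal K_f$, so these polynomials have no nontrivial common zero in $\P^n(\overline{\mathcal K_f})$. The homogeneous Nullstellensatz then provides an integer $m\ge d$ and homogeneous polynomials $P_{ij}\in\mathcal K_f[x_0,\dots,x_n]$ of degree $m-d$ (the coefficients can be taken in $\mathcal K_f$, not merely in $\overline{\mathcal K_f}$, because $x_j^m\in(Q_0,\dots,Q_n)$ is a linear condition on the coefficients of the $P_{ij}$'s, solvable over any extension iff solvable over $\mathcal K_f$) such that
$$x_j^m=\sum_{i=0}^n P_{ij}(x_0,\dots,x_n)\,Q_i(x_0,\dots,x_n)\qquad(0\le j\le n).$$
Substituting $\tilde f$ and applying the monomial bound to each $P_{ij}$ produces a function $h_0\in\mathcal C_f$ (depending only on the coefficients of the $P_{ij}$'s) with $|f_j|^m\le h_0\,\|\tilde f\|^{m-d}\max_i|Q_i(\tilde f)|$ for every $j$. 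Since $\tilde f$ is reduced, $\|\tilde f\|$ is strictly positive on $\Delta(R)$, so the trivial inequality $\|\tilde f\|^m\le(n+1)^{m/2}\max_j|f_j|^m$ gives $\|\tilde f\|^d\le(n+1)^{m/2}h_0\max_i|Q_i(\tilde f)|$. Setting $h_2:=\bigl((n+1)^{m/2}h_0\bigr)^{-1}\in\mathcal C_f$ (using that $\mathcal C_f$ is closed under reciprocals of nonzero elements and under multiplication by positive constants in $\mathcal K_f$) completes the proof.

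The only nontrivial point is the Nullstellensatz step with coefficients in the non–algebraically closed field $\mathcal K_f$, and this is handled by the resultant/specialization argument sketched above. Everything else is bookkeeping with the semiring-like structure of $\mathcal C_f$, which is stable under the operations $+$, $\cdot$, and inversion of nonzero elements because $\mathcal K_f$ is a field.
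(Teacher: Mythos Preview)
The paper does not supply its own proof of this lemma; it simply quotes the result from \cite{DT}. Your argument is correct and is the standard one: the upper bound is an immediate triangle-inequality estimate on the monomial expansion, and the lower bound comes from the homogeneous Nullstellensatz, legitimized over the non-algebraically-closed field $\mathcal K_f$ by the nonvanishing of the Macaulay resultant at generic $z$ together with the observation that $x_j^m\in(Q_0,\dots,Q_n)$ is a $\mathcal K_f$-linear condition on the coefficients of the $P_{ij}$.
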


\begin{lemma}[{see \cite[Lemma 3.1]{Q22}}]\label{2.5}
Let $Q_1,\ldots,Q_{l}$ be $l$ hypersurfaces in $\P^n(\C)$ of the same degree $d\ge 1$, such that $\bigcap_{i=1}^{l}Q_i^*=\varnothing$ and
$$\dim\left (\bigcap_{i=1}^{s}Q_i^*\right )=n-u\ \forall t_{u}\le  s< t_{u+1},1\le u\le n,$$
where $t_1,t_2,\ldots,t_n$ are integers with $1=t_1<t_2<\cdots<t_{n+1}=l$. Then there exist $n+1$ hypersurfaces $P_1,\ldots,P_{n+1}$ in $\P^n(\C)$ of the forms
$$P_u=\sum_{j=1}^{t_{u}}c_{uj}Q_j, \ c_{uj}\in\C,\ u=1,\ldots,n+1,$$
such that $\bigcap_{u=1}^{n+1}P_u^*=\varnothing.$
\end{lemma}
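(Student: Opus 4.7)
The plan is to linearise the problem via a Veronese-type embedding associated with a basis of $I_d(V)$ and then apply the second main theorem for linearly non-degenerate maps from $\Delta(R)$ (Theorem \ref{2.1}). First, by replacing each $Q_i$ with $Q_i^{d/d_i}$, I would reduce to the case where all hypersurfaces share the common degree $d$; this preserves $N$-subgeneral position with respect to $V$ and produces the weights $\tfrac{1}{d_i}$ on the right-hand side of the final inequality. Then, fix a $\C$-basis $\phi_1,\ldots,\phi_{H_V(d)}$ of $I_d(V)\subset H_d$ and define
$$F:=(\phi_1(\tilde f):\cdots:\phi_{H_V(d)}(\tilde f)):\Delta(R)\rightarrow\P^{H_V(d)-1}(\C).$$
The hypothesis that $f$ is nondegenerate over $I_d(V)$ is exactly the linear nondegeneracy of $F$, and Lemma \ref{new0} gives $T_F(r)=dT_f(r)+O(1)$.

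The combinatorial core is a pointwise reduction from $N$-subgeneral position to general position with respect to $V$. For $z\in\Delta(R)$ outside a discrete set, reorder the hypersurfaces so that
$$\frac{|Q_{i_1}(\tilde f)(z)|}{\|\tilde f(z)\|^d}\le\cdots\le\frac{|Q_{i_q}(\tilde f)(z)|}{\|\tilde f(z)\|^d}.$$
By hypothesis $V\cap\bigcap_{j=1}^{N+1}Q_{i_j}^*=\varnothing$, so Lemma \ref{2.5} (in the $V$-adapted form developed in \cite{QA,Q18,Q20}) yields $k+1$ hypersurfaces $P_1,\ldots,P_{k+1}$, each a $\C$-linear combination of $Q_{i_1},\ldots,Q_{i_{N+1}}$, with $V\cap\bigcap_{j=1}^{k+1}P_j^*=\varnothing$. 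Each $[P_j]\in I_d(V)$ corresponds via the chosen basis to a hyperplane $H_{P_j}$ of $\P^{H_V(d)-1}(\C)$ satisfying $P_j(\tilde f)=H_{P_j}(F)$. Combining $|P_j(\tilde f)|\le C\max_s|Q_{i_s}(\tilde f)|$ with the ordering yields the pointwise estimate
$$\sum_{i=1}^q\log\frac{\|\tilde f\|^d}{|Q_i(\tilde f)(z)|}\le\frac{2N-k+1}{k+1}\max_K\sum_{j\in K}\log\frac{\|F\|}{|H_{P_j}(F)(z)|}+O(1),$$
where $K$ ranges over a finite set of $(k+1)$-element index combinations corresponding to all possible outputs of the construction in Lemma \ref{2.5}.

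Applying Theorem \ref{2.1} to $F$ with the finite family $\{H_{P_j}\}_K$ gives
\begin{align*}
\|_\gamma\ \int_0^{2\pi}\max_K\sum_{j\in K}\log\frac{\|F\|}{|H_{P_j}(F)|}\frac{d\theta}{2\pi}+N_W(r)&\le H_V(d)T_F(r)\\
&\quad+\tfrac{(H_V(d)-1)H_V(d)}{2}(\log\gamma(r)+\epsilon\log r)+O(\log T_f(r)),
\end{align*}
where $W$ is the generalised Wronskian of $F$. Integrating the pointwise bound around $|z|=r$, substituting $T_F(r)=dT_f(r)+O(1)$, and using Jensen's formula together with the Wronskian-type bound $N_{H_{P_j}(F)}(r)-N^{[H_V(d)-1]}_{H_{P_j}(F)}(r)\le N_W(r)$ to pass to truncated counting functions — and then to $N^{[H_V(d)-1]}_{Q_i(f)}(r)$ using that each $P_j$ is a linear combination of the $Q_{i_s}$'s — we arrive at the desired inequality of Theorem \ref{1.4}.

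The principal obstacle is the pointwise comparison with the precise constant $(2N-k+1)/(k+1)$: one must split $\sum_i$ into the $N+1$ \emph{smallest} terms (handled by the $k+1$ general-position combinations $P_j$) and the remaining $q-N-1$ \emph{large} terms (dominated by the minimum of the small group), with careful bookkeeping of the linear-combination coefficients to absorb the factor $(2N-k+1)/(k+1)$. A secondary technical point is to track the truncation level $H_V(d)-1$ and the growth function $\gamma$ faithfully through the chain of inequalities, so that Theorem \ref{2.1} is invoked in its $\|_\gamma$ form with the exact $\gamma(r)=e^{(1+\epsilon)(c_f+\epsilon)T_f(r)}$, which is what permits the final assertion in the ``$\|_\gamma$'' form.
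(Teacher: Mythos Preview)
Your proposal addresses the wrong statement. The lemma in question (Lemma~\ref{2.5}) is a purely algebraic--geometric assertion: given hypersurfaces $Q_1,\ldots,Q_l$ with the prescribed dimension drops of the successive intersections, one can build $n+1$ linear combinations $P_1,\ldots,P_{n+1}$ (with $P_u$ supported on $Q_1,\ldots,Q_{t_u}$) whose common zero locus is empty. No holomorphic map, no characteristic function, no Wronskian enters. The paper does not even prove this lemma here; it is quoted from \cite{Q22}, where the argument is a short induction using a generic choice of the $c_{uj}$ and Bertini-type dimension counting. Your write-up is instead a sketch of the proof of Theorem~\ref{1.4}, in which Lemma~\ref{2.5} is merely one ingredient.

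Even viewed as a proof of Theorem~\ref{1.4}, your outline has a substantive gap. The paper obtains the constant $\dfrac{2N-k+1}{k+1}$ via the Nochka weights of Lemma~\ref{4.1}: one introduces weights $\omega_i\in(0,1]$ with $\sum_i\omega_i=\tilde\omega(q-2N+k-1)+k+1$, uses property (v) to pass from the $N+1$ smallest terms to a $(k+1)$-element subset $R^o$, and only at the end divides by $\tilde\omega\ge\dfrac{k+1}{2N-k+1}$. Your claimed pointwise inequality
\[
\sum_{i=1}^q\log\frac{\|\tilde f\|^d}{|Q_i(\tilde f)|}\le\frac{2N-k+1}{k+1}\max_K\sum_{j\in K}\log\frac{\|F\|}{|H_{P_j}(F)|}+O(1)
\]
does not follow from Lemma~\ref{2.5} and the sorting alone. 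Lemma~\ref{2.5} combined with Lemma~\ref{2.6} yields only the cruder bound with factor $\delta\le N-k+1$ (hence total defect $(N-k+1)(k+1)$ rather than $2N-k+1$); sharpening this to $\dfrac{2N-k+1}{k+1}$ is precisely the content of the Nochka weight machinery, which you have bypassed. The ``careful bookkeeping'' you allude to as the principal obstacle is in fact the entire substance of Lemma~\ref{4.1} and cannot be absorbed into an $O(1)$.
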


\begin{lemma}[{see \cite[Lemma 3.2]{Q22}}]\label{2.6}
Let $t_1,t_2,\ldots,t_{n+1}$ be $n+1$ integers such that $t_1<t_2<\cdots <t_{n+1}$, and let $\delta =\underset{2\le s\le n+1}\max\dfrac{t_s-t_1}{s-1}$. 
Then for every $n$ real numbers $a_1,a_2,\ldots,a_{n}$  with $a_1\ge a_2\ge\cdots\ge a_{n}\ge 1$, we have
$$ a_1^{t_2-t_1}a_2^{t_3-t_2}\cdots a_{n}^{t_{n+1}-t_{n}}\le (a_1a_2\cdots a_{n})^{\delta}.$$
\end{lemma}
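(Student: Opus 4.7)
The plan is to lift the problem from the variety $V$ to a suitable projective space in which the hypersurfaces become hyperplanes, and then apply the disc second main theorem (Theorem \ref{2.1}) together with the combinatorial lemmas \ref{2.5}--\ref{2.6} to handle the $N$-subgeneral position on $V$. First, by replacing each $Q_i$ with $Q_i^{d/d_i}$, I may assume all the hypersurfaces have common degree $d$; the rescaling converts the original counting function $\frac{1}{d_i}N^{[H_V(d)-1]}_{Q_i(f)}(r)$ into (something bounded above by) $N^{[H_V(d)-1]}_{Q_i^{d/d_i}(f)}(r)$, so the statement reduces to the common-degree case. Next I pick a $\C$-basis $[\phi_1],\ldots,[\phi_{H_V(d)}]$ of $I_d(V)$ by homogeneous polynomials $\phi_j\in H_d$ and define the auxiliary holomorphic map
$$F=(\phi_1(\tilde f):\cdots:\phi_{H_V(d)}(\tilde f)):\Delta(R)\longrightarrow\P^{H_V(d)-1}(\C).$$
Since $f$ is nondegenerate over $I_d(V)$, the map $F$ is linearly nondegenerate, and standard comparison yields $T_F(r)=dT_f(r)+O(1)$. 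For each $i$ the class $[Q_i]\in I_d(V)$ corresponds to a hyperplane $L_i$ in $\P^{H_V(d)-1}(\C)$ with $L_i(F)=Q_i(f)$.

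The heart of the argument is a pointwise estimate on $\{|z|=r\}$. For (generic) $z$, I order the indices so that $|Q_{i_1}(\tilde f(z))|\le\cdots\le|Q_{i_q}(\tilde f(z))|$. The $N$-subgeneral position hypothesis gives $V\cap\bigcap_{s=1}^{N+1}Q_{i_s}^*=\varnothing$, hence the chain
$$W_s:=V\cap Q_{i_1}^*\cap\cdots\cap Q_{i_s}^*$$
drops from $\dim W_0=k$ to $-\infty$ within $N+1$ steps. Recording the $k+1$ indices $1=t_1<t_2<\cdots<t_{k+1}\le N+1$ where the dimension strictly decreases, Lemma \ref{2.5} produces $k+1$ linear combinations $P_1,\ldots,P_{k+1}$ of $Q_{i_1},\ldots,Q_{i_{N+1}}$ with $V\cap\bigcap_{u=1}^{k+1}P_u^*=\varnothing$. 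By Lemma \ref{new0} applied to $V$, after an exhaustion of the non-ordered tail I can bound the sum
$$\sum_{j=1}^q\log\frac{\|\tilde f\|^d}{|Q_j(\tilde f)|}$$
by the weighted sum over the first $N+1$ ordered indices, and then apply Lemma \ref{2.6} with the weight vector $(t_2-t_1,\ldots,t_{k+1}-t_k)$ and $\delta\le(2N-k+1)/(k+1)$ (the bound one computes from the strict-dimension-drop condition on $V$) to replace the $N+1$ ordered terms by a uniform product over the $k+1$ hypersurfaces $P_1,\ldots,P_{k+1}$. This yields, up to a term in $\mathcal C_f$,
$$\sum_{j=1}^q\log\frac{\|\tilde f\|^d}{|Q_j(\tilde f)|}\le\frac{2N-k+1}{k+1}\max_K\sum_{u\in K}\log\frac{\|\tilde f\|^d}{|P_u(\tilde f)|}+O(1),$$
where $K$ ranges over subsets over which the $P_u$'s have empty common intersection in $V$.

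Finally I apply Theorem \ref{2.1} to the linearly nondegenerate map $F$ in $\P^{H_V(d)-1}(\C)$ with the hyperplanes $\widetilde L_u$ associated to $P_u$, using the choice $\gamma(r)=e^{(1+\epsilon)(c_f+\epsilon)T_f(r)}$ (which satisfies $\int_0^R\gamma(r)dr=\infty$ by definition of $c_f$). The max-over-$K$ term on the left of Theorem \ref{2.1} absorbs the pointwise bound from the previous paragraph after integration against $d\theta/2\pi$. Converting $T_F=dT_f+O(1)$ and using that the Wronskian contribution $N_W(r)$ dominates $\sum_i N_{L_i(F)}(r)-\sum_i N^{[H_V(d)-1]}_{L_i(F)}(r)$ (the classical truncation lemma applied in dimension $H_V(d)-1$) gives both the stated truncation level $H_V(d)-1$ in the counting functions and the $\tfrac12(H_V(d)-1)H_V(d)(\log\gamma+\epsilon\log r)$ contribution. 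Dividing the resulting inequality by $d$ and re-inserting the $d/d_i$ factors from the opening reduction produces the claimed estimate.

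The main obstacle I anticipate is the bookkeeping in the third paragraph: first, confirming that the Nochka-type constant coming from Lemmas \ref{2.5}--\ref{2.6} applied on $V$ of dimension $k$ is exactly $(2N-k+1)/(k+1)$ (and not, say, the weaker $(2N-H_V(d)+2)/(H_V(d))$ one would get by ignoring the geometry of $V$); and second, propagating the finite-growth-index exceptional set $\|_\gamma$ correctly through the pointwise-to-integrated passage, since the error term $\tfrac{n(n+1)}{2}(\log\gamma+\epsilon\log r)$ with $n=H_V(d)-1$ is not a small term and must be tracked exactly rather than absorbed into $o(T_f(r))$.
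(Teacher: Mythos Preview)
Your proposal does not address the stated lemma at all. Lemma~\ref{2.6} is a purely elementary inequality about real numbers $a_1\ge\cdots\ge a_n\ge 1$ and integers $t_1<\cdots<t_{n+1}$; it has nothing to do with varieties, holomorphic maps, characteristic functions, or second main theorems. What you have written is a proof sketch for Theorem~\ref{1.4}, in which Lemma~\ref{2.6} appears only as one of several ingredients. You have confused the statement you were asked to prove with a later theorem that \emph{uses} it.

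For Lemma~\ref{2.6} itself, the paper simply cites \cite[Lemma~3.2]{Q22} and gives no proof. A self-contained argument is a two-line Abel summation: set $b_i=\log a_i\ge 0$ (so $b_1\ge\cdots\ge b_n\ge 0$), write
\[
\sum_{i=1}^n(t_{i+1}-t_i)b_i=(t_{n+1}-t_1)b_n+\sum_{i=2}^n(t_i-t_1)(b_{i-1}-b_i),
\]
use $t_i-t_1\le\delta(i-1)$ termwise (each coefficient $b_{i-1}-b_i$ and $b_n$ is nonnegative), and observe that the resulting expression telescopes to $\delta\sum_{i=1}^n b_i$.

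As an aside: even viewed as an attempt at Theorem~\ref{1.4}, your approach diverges from the paper's. The paper handles the $N$-subgeneral position via the Nochka weights of Lemma~\ref{4.1}, which directly produce the constant $(2N-k+1)/(k+1)$; you instead try to reach that constant through the distributive-constant machinery (Lemmas~\ref{2.5}--\ref{2.6}). The step where you assert $\delta\le(2N-k+1)/(k+1)$ is not justified in your sketch, and in fact the natural bound from the dimension-drop indices $1=t_1<\cdots<t_{k+1}\le N+1$ gives only $\delta\le N-k+1$, which is weaker. The Nochka-weight route is what actually delivers the sharper constant.
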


\begin{proof}[{\sc Proof of Theorem \ref{1.1}}]
Replacing $Q_i$ by $Q_i^{d/d_i}$ if necessary with the note that 
$$\dfrac{1}{d}N^{[L_0]}_{Q_i^{d/d_i}(f)}(r)\le\frac{1}{d_i}N^{[L_i]}_{Q_i(f)}(r),$$
we may assume that all hypersurfaces $Q_i\ (1\le i\le q)$ are of the same degree $d$. We may also assume that $q>\delta(n+1)$. 

Consider a reduced representation $\tilde f=(f_0,\ldots ,f_n): \Delta(R)\rightarrow \C^{n+1}$ of $f$. We also note that 
$$N^{[L_0]}_{Q_i(\tilde f)}(r)=N^{[L_0]}_{\tilde Q_i(\tilde f)}(r)+o(T_f(r)).$$
Then without loss of generality we may assume that $Q_i\in\mathcal K_f[x_0,\ldots,x_n]$.

Take $z_0$ be a point such that 
$$ \delta =\max_{\Gamma\subset\{1,\ldots,q\}}\dfrac{\sharp\Gamma}{n-\dim\bigcap_{j\in\Gamma}Q_i(z_0)^*}.$$
Note that $\delta\ge 1$, and hence $q>n+1$. If there exists $i\in\{1,\ldots,q\}$ such that $\bigcap_{\underset{j\ne i}{j=1}}^qQ_j(z_0)^*\ne\varnothing$ then 
$$ \delta\ge\dfrac{q-1}{n}>\dfrac{q}{n+1}.$$
This is a contradiction. Therefore, $\bigcap_{\underset{j\ne i}{j=1}}^qQ_j(z_0)^*=\varnothing$ for all $i\in\{1,2,\ldots,q\}$.

Since the number of slowly moving hypersurfaces occurring in this proof is finite, we may choose a function $c\in\mathcal C_{\mathcal K}$ such that for each given slowly moving hypersurface $Q$ in this proof, we have
$$ Q(z)({\bf x})\le c(z)\|{\bf x}\|^{\deg Q} $$
for all ${\bf x}=(x_0,\ldots,x_n)\in\C^{n+1}$, $z\in\Delta(R)$. 

We denote by $\mathcal I$ the set of all permutation $I=(i_1,\ldots,i_q)$ of the set $\{1,\ldots,q\}$. We set $n_0=\sharp\mathcal I$, $n_0=q!$ and write
$\mathcal I=\{I_1,\ldots,I_{n_0}\}$,
where $I_i=(I_i(1),\ldots,I_i(q))\in\mathbb N^q$ and $I_1<I_2<\cdots <I_{n_0}$ in the lexicographic order.

For each $I_i\in\mathcal I$, since $\bigcap_{j=1}^{q-1}Q_{I_i(j)}(z_0)=\varnothing$, there exist $n+1$ integers $t_{i,0},t_{i,1},\ldots,t_{i,n}$ with $0=t_{i,0}<\cdots<t_{i,n}=l_i$, where $l_i\le q-2$ such that $\bigcap_{j=0}^{l_i}Q_{I_i(j)}(z_0)^*=\varnothing$ and
$$\dim\left (\bigcap_{j=0}^{s}Q_{I_i(j)}(z_0)^*\right )=n-u\ \forall t_{i,u-1}\le s<t_{i,u},1\le u\le n.$$
Then, $\delta \ge\dfrac{t_{i,u}-t_{i,0}}{u}$ for all $1\le u\le n.$ Denote by $P'_{i,0},\ldots,P'_{i,n}$ the hypersurfaces obtained in Lemma \ref{2.5} with respect to the hypersurfaces $Q_{I_i(0)}(z_0),\ldots,Q_{I_i(l_i)}(z_0)$. Now, for each $P'_{i,j}$ constructed by
$$ P'_{i,j}=\sum_{s=0}^{t_{i,j}}a_{i,j,s}Q_{I_i(s)}(z_0)\ (a_{i,j,s}\in\C) $$
we define
$$ P_{i,j}(z)=\sum_{s=0}^{t_{i,j}}a_{i,j,s}Q_{I_i(s)}(z).$$
Hence $\{P_{i,j}\}_{j=0}^{n}$ is a family of moving hypersurfaces in $\P^n(\C)$ with $P_{i,j}(z_0)=P'_{i,j}$. Then $\bigcap_{j=0}^{n}P_{i,j}(z_0)^*=\varnothing$, and hence $\{P_{i,j}(z)\}_{j=0}^{n}$ is in weakly general position. 

We may choose a positive constant $b\ge 1$, commonly for all $I_i\in\mathcal I$, such that
$$ |P_{i,j}(z)({\bf x})|\le b\max_{0\le s\le t_{i,j}}|Q_{I_i(j)}(z)({\bf x})|, $$
for all $0\le j\le n$ and for all ${\bf x}=(x_0,\ldots,x_n)\in\C^{n+1}$ and $z\in\Delta (R)$. 
Denote by $\mathcal S$ the set of all points $z\in\Delta(R)$ such that $\bigcap_{j=0}^{n}P_{i,j}(z)\ne\varnothing$ for some $I_i$. Then $\mathcal S$ is a discrete subset of $\Delta(R)$.

By Lemma \ref{new0}, there exists a function $A\in\mathcal C_{\mathcal K}$ such that
$$ \|\tilde f (z)\|^d\le A(z)\max_{0\le j\le l_i}|Q_{I_i(j)}(z)(\tilde f(z))|\ (\forall I_i\in\mathcal I).$$
For a point $z\in \Delta\setminus\biggl\{\bigcup_{i=1}^qQ_i(\tilde f)^{-1}(\{0\})\cup \bigcup_{\underset{I_i\in\mathcal I}{0\le j\le n}}P_{i,j}(\tilde f)^{-1}(\{0\})\biggl\}$, taking $I_i\in\mathcal I$ so that
$$ |Q_{I_i(0)}(z)(\tilde f(z))|\le |Q_{I_i(1)}(z)(\tilde f(z))|\le\cdots\le |Q_{I_i(q-1)}(z)(\tilde f(z))|,$$
by Lemma \ref{3.1} we have
\begin{align*}
\prod_{i=1}^q\dfrac{\|\tilde f (z)\|^d}{|Q_i(z)(\tilde f(z))|}&\le\dfrac{A(z)^{q-l_i}}{c(z)^{l_j}}\prod_{j=0}^{l_j-1}\dfrac{c(z)\|\tilde f (z)\|^d}{|Q_{I_i(j)}(z)(\tilde f(z))|}\\
&\le\dfrac{A(z)^{q-l_j}}{c(z)^{l_j}}\prod_{j=0}^{n-1}\left(\dfrac{c(z)\|\tilde f (z)\|^d}{|Q_{I_i(t_j)}(z)(\tilde f(z))|}\right)^{t_{i,j+1}-t_{i,j}}\\
&\le \dfrac{A(z)^{q-l_j}}{c(z)^{l_j}}\prod_{j=0}^{n-1}\left(\dfrac{c(z)\|\tilde f (z)\|^d}{|Q_{I_i(t_j)}(z)(\tilde f(z))|}\right)^{\delta}\\
&\le \dfrac{A(z)^{q-l_j}b^{n\delta}}{c(z)^{l_j-n\delta}}\prod_{j=0}^{n-1}\left(\dfrac{\|\tilde f (z)\|^d}{|P_{i,j}(z)(\tilde f(z))|}\right)^{\delta}\\
&\le C(z)\prod_{j=0}^{n}\left(\dfrac{\|\tilde f (z)\|^d}{|P_{i,j}(z)(\tilde f(z))|}\right)^{\delta},
\end{align*}
where $C\in\mathcal C_{\mathcal K}$.

Now, for each non-negative integer $L$, we denote by $V_L$ the vector space (over $\mathcal K$) consisting of all homogeneous polynomials of degree $L$ in $\mathcal K[x_0,\ldots ,x_n]$ and the zero polynomial. Denote by $(P_{i,1},\ldots ,P_{i,n})$ the ideal in $\mathcal K[x_0,\ldots ,x_n]$ generated by $P_{i,1},\ldots ,P_{i,n}$.
\begin{lemma}[{see \cite[Lemma 5]{AP}, \cite[Proposition 3.3]{DT}}]\label{3.3}
Let $\{P_i\}_{i=1}^q\ (q\ge n+1)$ be a set of homogeneous polynomials of common degree $d\ge 1$ in $\mathcal K_f[x_0,\ldots ,x_n]$ in weakly general position. Then for any nonnegative integer $N$ and for any $J:=\{j_1,\ldots ,j_n\}\subset\{1,\ldots ,q\},$ the dimension of the vector space $\frac{V_L}{(P_{j_1},\ldots ,P_{j_n})\cap V_L}$ is equal to the number of $n$-tuples $(s_1,\ldots ,s_n)\in\mathbb N^n_0$ such that $s_1+\cdots +s_n\le L$ and $0\le s_1,\ldots,s_n\le d-1 $. In particular, for all $L\ge n(d-1)$, we have
$$ \dim\frac{V_L}{(P_{j_1},\ldots ,P_{j_n})\cap V_L}=d^n. $$
\end{lemma}

%For each non negative integer $k$, denote by $V_k$ the vector subspace (over the field $\mathcal K_{\{Q_i\}}$) of all homogeneous polynomials of degree $k$ in $\mathcal K_{\{Q_i\}}[x_0,\ldots ,x_n]$. Set $V_k=\{0\}$ for $k<0$.

For each positive integer $L$ divisible by $d$ and for each $({\mathbf i})=(i_1,\ldots,i_n)\in\mathbb N^n_0$ with $\sigma({\mathbf i})=\sum_{s=1}^ni_s\le\frac{L}{d}$, we set
$$W^I_{({\mathbf i})}=\sum_{({\mathbf j})=(j_1,\ldots ,j_n)\ge ({\mathbf i})}P_{i,1}^{j_1}\cdots P_{i,n}^{j_n}\cdot V_{L-d\sigma({\mathbf j})}. $$
Hence $W^I_{(0,\ldots,0)}=V_L$ and $W^I_{({\mathbf i})}\supset W^I_{({\mathbf j})}$ if $({\mathbf i})<({\mathbf j})$ in the lexicographic ordering. Then $W^I_{({\mathbf i})}$ is a filtration of $V_L$.
  
Let $({\mathbf i})=(i_1,\ldots ,i_n),({\mathbf i}')=(i_1',\ldots ,i_n')\in \mathbb N^n_0$. Suppose that $({\mathbf i}')$ follows $({\mathbf i})$ in the lexicographic ordering. 
As (3.5) in \cite{Q18}, we have
\begin{align}\label{3.4}
\dim \dfrac{W^I_{({\mathbf i})}}{W^I_{({\mathbf i}')}}=\dim \dfrac{V_{L-d\sigma({\mathbf i})}}{(P_{i,1},\ldots ,P_{i,n})\cap V_{L-d\sigma({\mathbf i})}}.
\end{align}

Fix $L$ large enough (chosen later) and set $u=u_L:=\dim V_L=\binom{L+n}{n}$. We assume that 
$$ V_L=W^I_{({\mathbf i}_1)}\supset W^I_{({\mathbf i}_2)}\supset\cdots\supset W^I_{({\mathbf i}_K)}, $$
where $W^I_{({\mathbf i}_{s+1})}$ follows $W^I_{({\mathbf i}_s)}$ in the ordering and $({\mathbf i}_K)=(\frac{L}{d},0,\ldots ,0)$. It is easy to see that $K$ is the number of $n$-tuples $(i_1,\ldots,i_n)$ with $i_j\ge 0$ and $i_1+\cdots +i_n\le\frac{L}{d}$. Then we have
$$ K =\binom{\frac{L}{d}+n}{n}.$$
For each $k\in\{1,\ldots ,K-1\}$ we set $m^I_k=\dim \frac{W^I_{({\mathbf i}_k)}}{W^I_{({\mathbf i}_{k+1})}}$, and set $m^I_K=1$. By Lemma \ref{3.3}, $m^I_k$ does not depend on $\{P_{i,1},\ldots ,P_{i,n}\}$ and $k$, but on $\sigma({\mathbf i_k})$. Hence, we set $m_k=m^I_k$. We also note that
\begin{align}\label{3.5}
m_k=d^n
\end{align}
 for all $k$ with $L-d\sigma({\mathbf i}_k)\ge nd$ (it is equivalent to $\sigma({\mathbf i}_k)\le\dfrac{L}{d}-n$).

Using the above filtration, we may choose a basis $\{\psi^I_1,\cdots,\psi^I_u\}$ of $V_L$ such that  
$$\{\psi_{u-(m_s+\cdots +m_K)+1},\ldots ,\psi^I_u\}$$
 is a basis of $W^I_{({\mathbf i}_s)}$. 

For each integer $l \ (0\le l\le \frac{L}{d})$, we set $m(l)=m_k$, where $k$ is an index such that $\sigma({\mathbf i}_k)=l$. Since $m_k$ only depends on $\sigma({\mathbf i}_k)$, the above definition of $m(l)$ is well defined. We see that
$$ \sum_{k=1}^Km_ki_{sk}=\sum_{l=0}^{\frac{L}{d}}\sum_{k|\sigma({\mathbf i}_k)=l}m(l)i_{sk}=\sum_{l=0}^{\frac{L}{d}}m(l)\sum_{k|\sigma({\mathbf i}_k)=l}i_{sk}. $$
Here ${\mathbf i}_k=(i_{1k},\ldots,i_{nk})$. Note that, by the symmetry $(i_1,\ldots,i_n)\rightarrow (i_{\sigma (1)},\ldots ,i_{\sigma (n)})$ with $\sigma\in S(n)$,  $\sum_{k|\sigma({\mathbf i}_k)=l}i_{sk}$ does not depend on $s$. We set 
$$ a:= \sum_{k=1}^Km_ki_{sk},\ \text{ which is independent of $s$ and $I$}.$$
Hence, we have
\begin{align*}
\log\prod_{l=1}^u|\psi^I_l(\tilde f)(z)|&\le a\left (\log\prod_{j=1}^n\dfrac{|P_{i,j}(\tilde f)(z)|}{\|\tilde f(z)\|^d}\right)+uL\log \|\tilde f (z)\|+\log c_I(z),
\end{align*}
i.e.,
\begin{align*}
a\left (\log\prod_{j=1}^n\dfrac{\|\tilde f(z)\|^d}{|P_{i,j}(\tilde f)(z)|}\right)\le\log\prod_{l=1}^u\frac{\|\tilde f (z)\|^L}{|\psi^I_l(\tilde f)(z)|}+\log c_I(z),
\end{align*}

Set $c_0(z)=C(z)\prod_{I}(1+c_I^{\delta/a})\in\mathcal C_{f}$. Combining the above inequality with (\ref{2.5}), we obtain that
\begin{align}\label{3.7}
\log \prod_{i=1}^q\dfrac{\|\tilde f (z)\|^d}{|Q_i(\tilde f)(z)|}\le \frac{\delta}{a}\log\prod_{l=1}^u\dfrac{\|\tilde f (z)\|^L}{|\psi^I_l(\tilde f)(z)|}+\log c_0.
\end{align}

We now write
$$ \psi^I_l=\sum_{J\in\mathcal T_L}c^I_{lJ}x^J\in V_L,\ \ c^I_{lJ}\in\mathcal K_{\{Q_i\}}, $$
where $\mathcal T_L$ is the set of all $(n+1)$-tuples $J=(i_0,\ldots,i_n)$ with $\sum_{s=0}^nj_s=L$, $x^J=x_0^{j_0}\cdots x_n^{j_n}$  and $l\in\{1,\ldots ,u\}$. For each $l$, we fix an index $J^I_l\in J$ such that $c^I_{lJ^I_l}\not\equiv 0$. Define
$$ \mu^I_{lJ}=\dfrac{c^I_{lJ}}{c^I_{lJ^I_l}},\  J\in\mathcal T_L.$$
Set $\Phi =\{\mu^I_{lJ};I\subset\{1,\ldots ,q\},\sharp I=n, 1\le l\le M, J\in\mathcal T_L\}$. Let $B=\sharp\Phi$ and note that $1\in\Phi$. One has $B\le u\binom{q}{n}(\binom{L+n}{n}-1)=\binom{L+n}{n}(\binom{L+n}{n}-1)\binom{q}{n}$. For each positive integer $l$, we denote by $\mathcal L(\Phi (l))$ the linear span over $\C$ of the set 
$$\Phi (l)=\{\gamma_1\cdots\gamma_l;\gamma_i\in\Phi\}.$$
We see that
$$ \dim\mathcal L(\Phi(l))\le\sharp\Phi (l)\le\binom{B+l-1}{B-1}.$$
We may choose a positive integer $p$ such that
$$ p\le p_0:=\left[\dfrac{B-1}{\log (1+\frac{\epsilon}{3(n+1)\delta})}\right]^2\text{ and }\dfrac{\dim\mathcal L(\Phi (p+1))}{\dim\mathcal L(\Phi (p))}\le 1+\dfrac{\epsilon}{3(n+1)\delta}. $$
Indeed, if $\dfrac{\dim\mathcal L(\Phi (p+1))}{\dim\mathcal L(\Phi (p))}> 1+\dfrac{\epsilon}{3(n+1)\delta}$ for all $p\le p_0$, we have 
$$\dim\mathcal L(\Phi (p_0+1))\ge (1+\dfrac{\epsilon}{3(n+1)\delta})^{p_0}.$$ 
Therefore, we have
\begin{align*}
\log (1+\dfrac{\epsilon}{3(n+1)\delta})&\le\dfrac{\log \dim\mathcal L(\Phi (p_0+1))}{p_0}\le\dfrac{\log \binom{B+p_0}{B-1}}{p_0}\\ 
& =\dfrac{1}{p_0}\log \prod_{i=1}^{B-1}\dfrac{p_0+i+1}{i}<\dfrac{(B-1)\log (p_0+2)}{p_0}\\
&\le \dfrac{B-1}{\sqrt{p_0}}\le \dfrac{(B-1)\log (1+\frac{\epsilon}{3(n+1)\delta})}{B-1}\\
&=\log (1+\frac{\epsilon}{3(n+1)\delta}).
\end{align*}
This is a contradiction.

We fix a positive integer $p$ satisfying the above condition. Put $s=\dim\mathcal L(\Phi (p))$ and $t=\dim\mathcal L(\Phi (p+1))$. Let $\{b_1,\ldots,b_t\}$ be an $\C$-basis of $\mathcal L(\Phi (p+1))$ such that $\{b_1,\ldots ,b_s\}$ be a $\C$-basis of $\mathcal L(\Phi (p))$.

For each $l\in\{1,\ldots ,u\}$, we set 
$$ \tilde\psi^I_l =\sum_{J\in\mathcal T_L}\mu^I_{lJ}x^I.$$
For each $J\in\mathcal T_L$, we consider homogeneous polynomials $\phi_J(x_0,\ldots ,x_n)=x^J$. Let $F$ be a holomorphic mapping of $\Delta(R)$ into $\P^{tu-1}(\C)$ with a reduced representation $\tilde F = (hb_i\phi_J(\tilde f))_{1\le i\le t,J\in\mathcal T_L}$, where $h$ is a nonzero meromorphic function on $\Delta(R)$. We see that 
$$\|\ N_h(r)+N_{1/h}(r)=o(T_f(r)). $$
Since $f$ is algebraically nondegenerate over $\mathcal K$, $F$ is linear nondegenerate over $\C$.  We see that there exist nonzero functions $c_1,c_2\in\mathcal C_{f}$ such that 
$$c_1|h|.\|\tilde f\|^L\le \|\tilde F\|\le c_2|h|.\|\tilde f\|^L.$$

For each $l\in\{1,\ldots ,u\}, 1\le i\le s$, we consider the linear form $L^I_{il}$ in $x^J$ such that 
$$ hb_i\tilde\psi^I_l(\tilde f)=L^I_{il}(\tilde F). $$
It is clear that $\{b_i\tilde\psi^I_l(\tilde f); 1\le i\le s,1\le l\le M\}$ is linearly independent over $\C$, and so is $\{L^I_{il}(\tilde F);1\le i\le s,1\le l\le u\}$. This yields that $\{L^I_{il};1\le i\le s,1\le l\le u\}$ is linearly independent over $\C$. 

For every point $z$ which is neither zero nor pole of any $hb_i\psi^I_l(\tilde f)$, we also see that
\begin{align*}
s\log\prod_{l=1}^u\dfrac{\|\tilde f (z)\|^L}{|\psi^I_l(\tilde f)(z)|}&=\log\prod_{\overset{1\le l\le u}{1\le i\le s}}\dfrac{\|\tilde F (z)\|}{|hb_i\psi^I_l(\tilde f)(z)|}+\log c_3(z)\\
&=\log\prod_{\overset{1\le l\le u}{1\le i\le s}}\dfrac{\|\tilde F (z)\|\cdot \|L^I_{il}\|}{|L^I_{il}(\tilde F)(z)|}+\log c_4(z),
\end{align*}
where $c_3,c_4$ are nonzero functions in $\mathcal C_{f}$, not depend on $f$ and $I$, but on $\{Q_i\}_{i=1}^q$.
Combining this inequality and (\ref{3.7}), we obtain that
\begin{align}\label{3.8}
\log \prod_{i=1}^q\dfrac{\|\tilde f (z)\|^d}{|Q_i(\tilde f)(z)|}\le \frac{\delta}{sa}\left (\max_{I}\log\prod_{\overset{1\le l\le u}{1\le i\le s}}\dfrac{\|\tilde F (z)\|\cdot \|L^I_{il}\|}{|L^I_{il}(\tilde F)(z)|}+\log c_4(z)\right)+\log c_0(z),
\end{align}
for all $z$ outside a discrete subset of $\Delta(R)$.

Since $\tilde F$ is linearly nondegenerate over $\C$, then the generalized Wronskian
$$ W(hb_i\tilde\phi_J(\tilde f))=\det\left ((hb_i\tilde\phi_J(\tilde f))^{(k)}; 1\le i\le t,J\in\mathcal T_L\right)_{0\le k\le tu-1}\not\equiv 0.$$
By Theorem \ref{2.4}, we have
\begin{align}\label{3.9}
\begin{split}
\Big\|_\gamma\ \int\limits_{S(r)}\max_{I}\biggl\{\log\prod_{\overset{1\le l\le u}{1\le i\le s}}\dfrac{\|\tilde F (z)\|\cdot \|L^I_{il}\|}{|L^I_{il}(\tilde F)(z)|}\biggl\}&\le tuT_F(r)-N_{W^{\alpha}(hb_i\tilde\phi_J(\tilde f))}(r)\\
&+\dfrac{(tu-1)tu}{2}\Gamma(r)+o(T_f(r)),
\end{split}
\end{align}
where $\Gamma(r)=\log\gamma(r)+\epsilon\log r$.
%Note that, by the definition of the Wronskian we have $W^{\alpha}(hb_i\tilde\phi_J(\tilde f))=h^{tu-1}W^{\alpha}(b_i\tilde\phi_J(\tilde f))$. 
Integrating both sides of (\ref{3.8}) and using (\ref{3.9}), we get
\begin{align}\label{3.10}
\begin{split}
\|\ qdT_f(r)-\sum_{i=1}^qN_{Q_i(f)}(r)\le&\frac{tu\delta}{sa}T_F(r)-\frac{\delta}{sa}N_{W^{\alpha}(hb_i\tilde\phi_J(\tilde f))}(r)\\
&+\dfrac{(tu-1)tu}{2}\Gamma(r)+o(T_f(r)).
\end{split}
\end{align}

We now estimate the quantity $\sum_{i=1}^qN_{Q_i(f)}(r)-\frac{\delta}{sa}N_{W^{\alpha}(hb_i\tilde\phi_J(\tilde f))}(r)$. 
First, we recall that
$$ Q_i(x)=\sum_{J\in\mathcal T_d}a_{iJ}x^I\in\mathcal K[x_0,\ldots ,x_n]. $$
Let $T=(\cdots ,t_{kJ},\cdots )\ (k\in\{1,\ldots ,q\}, J\in\mathcal T_d)$ be a family of variables and
$$  Q^T_i=\sum_{J\in\mathcal T_d}T_{iJ}x^J\in\mathbb Z[T,x],\ \ i=1,\ldots ,q. $$
%We set $J_0=(0,\ldots ,0,d)\in\mathcal T_d$ and as the assumption in Section 2, we have
%each $i$ $(1\le i\le q)$, we fix an index $J_i\in\mathcal T_d$ such that $a_{iJ_0}\not\equiv 0$. 
For each element $I_i\in\mathcal I$, we denote by $\tilde R_{I_i}\in\mathbb Z[T]$ the resultant of $\{Q^T_{I_i(j)}\}_{1\le j\le l_i}$. Then there exist a positive integer $\lambda$ (common for all $I_i$) and polynomials $\tilde b_{ijs}\ (0\le s\le n, 1\le j\le l_i)$ in $\mathbb Z[T,x]$, which are zero or homogeneous in $x$ with degree of $\lambda -d$ such that 
$$ x_s^\lambda\cdot\tilde R_{I_i}=\sum_{1\le j\le l_i}\tilde b_{ijs}Q^T_{I_i(j)}\ \text{ for all }s\in\{0,\ldots ,n\},$$
and $R_{I_i}=\tilde R_{I_i}(\ldots, a_{kJ},\ldots )\not\equiv 0$. We see that $R_{I_i}\in\mathcal K_{f}$ for all $i=1,\ldots,n_0$. Set
$$b_{ijs}=\tilde b_{ijs}((\ldots ,a_{jJ},\ldots ),(x_0,\ldots ,x_n)).$$
Then we have
$$ f^\lambda_s\cdot R_{I_i}=\sum_{1\le j\le l_i}b_{ijs}(\tilde f)Q_{I_i(j)}(\tilde f)\ \text{ for all }i\in\{0,\ldots ,n\}.$$
This implies that
$$ \nu_{R_{I_i}}\ge\min_{1\le j\le l_i}\nu_{Q_{I_i(j)}(\tilde f)}+\min_{0\le s\le n, 1\le j\le l_i}\nu_{b_{ijs}(\tilde f)}.$$
We set $R=\prod_{i=1}^{n_0}R_{I_i}\in\mathcal K$. It is easy to see that
$$ \nu_{b_{ijs}(\tilde f)}\ge -C\max_{k,J}\nu^\infty_{a_{kJ}}$$
for a positive constant $C$, and the left hand side of this inequality is only depend on $\{Q_i\}$. It implies that there exists a positive constant $c$, which depends only on $\{Q_i\}$, such that
$$ \min_{1\le j\le l_i}\nu_{Q_{I_i(j)}(f)}\le \nu_{R}+c\max_{k,J}\nu^\infty_{a_{kJ}}, $$
for all $I_i\in\mathcal I$.

Fix a point $z_0\in\Delta(R)$. Without lose of generality, we may assume that
$$ \nu_{Q_1(\tilde f)}(z_0)\ge\cdots\ge\nu_{Q_q(\tilde f)}(z_0),$$
and $I_1=(1,2,\ldots,q)\in\mathcal I, l_1\le q-2$.
Now, we let $I=\{1,\ldots,l_1\}\subset\{1,\ldots ,q\}$. Then
$$ \nu_{Q_{j}(f)}(z_0)\le \nu_{R}(z_0)+c\max_{k,J}\nu^\infty_{a_{kJ}}(z_0),\ j=l_1,\ldots,q. $$
On the other hand, by Lemma \ref{2.6} we have
\begin{align*}
\sum_{i=1}^{l_1-1}(\nu_{Q_i(\tilde f)}(z_0)-\nu^{[tu-1]}_{Q_{i}(\tilde f)}(z_0))&=\sum_{i=1}^{l_1}\min\{0,\nu_{Q_i(\tilde f)}(z_0)-tu+1\}\\ 
&\le\sum_{i=1}^{n}(t_{i+1}-t_i)\min\{0,\nu_{Q_{t_i}(\tilde f)}(z_0)-tu+1\}\\
&\le\sum_{i=1}^{n}\delta\min\{0,\nu_{Q_{t_i}(\tilde f)}(z_0)-tu+1\}\\
&\le\sum_{i=1}^n\delta\min\{0,\nu_{P_{1,i}(\tilde f)}(z_0)-tu+1\}.
\end{align*}
Therefore,
\begin{align}\label{new2}
\begin{split}
\sum_{i=1}^q(\nu_{Q_i(\tilde f)}(z_0)-\nu^{[tu-1]}_{Q_{i}(\tilde f)}(z_0))\le& \sum_{i=1}^n\delta\min\{0,\nu_{P_{1,i}(\tilde f)}(z_0)-tu+1\}\\
&+(q-N)(\nu_{R}(z_0)+c\max_{k,J}\nu^{\infty}_{a_{kJ}}(z_0)).
\end{split}
\end{align} 

For $I\in\mathcal I$, take linear forms $h_{il}$ in $x^J$, $1\le l\le u, s+1\le i\le t, J\in\mathcal T_L$ such that $\{L^{I}_{il};1\le l\le u, 1\le i\le s\}\cup\{h_{il}; 1\le l\le u, s+1\le i\le t\}$ is linearly independent over $\C$. We easily see that
\begin{align}\label{3.12}
\begin{split}
\nu_{W^{\alpha}(hb_i\tilde\phi_J(\tilde f))}(z_0)&=\nu_{W^{\alpha}(L^{I}_{il}(\tilde F),\ldots,L^I_{il}(\tilde F))}(z_0)\\
&\ge\sum_{\overset{1\le l\le u}{1\le i\le s}}\left (\nu_{L^I_{il}(\tilde F)}(z_0)-\nu^{[tu-1]}_{L^I_{il}(\tilde F)}(z_0)\right )\\
&\ge \sum_{\overset{1\le l\le u}{1\le i\le s}}\left (\nu_{hb_i\tilde\psi^I_{il}(\tilde f)}(z_0)-\nu^{[tu-1]}_{hb_i\tilde\psi^I_{il}(\tilde f)}(z_0)\right )\\
&\ge \sum_{\overset{1\le l\le u}{1\le i\le s}}\left (\nu_{\tilde\psi^I_{il}(\tilde f)}(z_0)-\nu^{[tu-1]}_{\tilde\psi^I_{il}(\tilde f)}(z_0)\right )-c_0\max_{1\le i\le s}\nu^{\infty}_{hb_i}(z_0),
\end{split}
\end{align}
where $c_0$ is a positive constant, which is chosen independently of $I$, since there are only finite ordered subset $I$.
On the other hand, for each $1\le l\le u, 1\le i\le s$, since
\begin{align*}
\tilde\psi^I_{l}(\tilde f)=\frac{1}{c^I_{lJ_l^I}}\prod_{j=1}^nP_{i,j}^{i_{jk}}(\tilde f)h_l(\tilde f),
\end{align*}
where $(i_{1k},\ldots ,i_{nk})=I_k, h_l\in V_{L-d\sigma({\mathbf i}_k)}$, we have
\begin{align*}
\nu_{\tilde\psi^I_{il}(\tilde f)}(z_0)-\nu^{[tu-1]}_{\tilde\psi^I_{il}(\tilde f)}(z_0) \ge\sum_{j=1}^ni_{jk}(\nu_{P_{i,j}(\tilde f)}(z_0)-\nu^{[tu-1]}_{P_{i,j}(\tilde f)}(z_0))-c_1\max_{j,J}|\nu_{a_{jJ}}(z_0)|,
\end{align*}
where $c_1$ is a constant and the maximum is taken over all $a_{jJ}\not\equiv 0$. Summing-up both sides of the above inequalities over all $1\le i\le u,1\le l\le s$, we get
\begin{align}\label{3.15}
\begin{split}
\sum_{\overset{1\le i\le u}{1\le l\le s}}&(\nu_{\tilde\psi^I_{il}(\tilde f)}(z_0)-\nu^{[tu-1]}_{\tilde\psi^I_{il}(\tilde f)}(z_0))\\
&\ge\sum_{j=1}^ns\sum_{k=1}^Km^I_ki_{jk}(\nu_{P_{i,j}(\tilde f)}(z_0)-\nu^{[tu-1]}_{P_{i,j}(\tilde f)}(z_0))-c_{2}\max_{j,J}|\nu_{a_{jJ}}(z_0)|\\
&=as\sum_{j=1}^n(\nu_{P_{i,j}(\tilde f)}(z_0)-\nu^{[tu-1]}_{P_{i,j}(\tilde f)}(z_0))-c_{2}\max_{j,J}|\nu_{a_{jJ}}(z_0)|,
\end{split}
\end{align}
where $c_{2}$ is a constant, which depends only on $\{Q_i\}, t$ and $L$.

Combining (\ref{3.12}) and (\ref{3.15}), we get
\begin{align*}
\nu_{W^{\alpha}(hb_i\tilde\phi_J(\tilde f))}(z_0)\ge as\sum_{j=1}^n(\nu_{P_{i,j}(\tilde f)}(z_0)-\nu^{[tu-1]}_{P_{i,j}(\tilde f)}(z_0))-c_{2}\max_{j,J}\nu_{a_{jJ}}(z_0) -c_0\max_{1\le i\le s}\nu^{\infty}_{hb_i}(z_0).
\end{align*} 
Combining (\ref{3.12}) and this inequality, we obtain
\begin{align*}
\frac{\delta}{as}\nu_{W^{\alpha}(b_i\tilde\phi_J(\tilde f))}&(z_0)\ge\sum_{i=1}^q(\nu_{Q_i(\tilde f)}(z_0)-\nu^{[tu-1]}_{Q_{i}(\tilde f)}(z_0))\\
&-O(\nu_{R}(z_0)+\max_{j,J}|\nu_{a_{jJ}}(z_0)|+\max_{1\le i\le s}\nu^{\infty}_{hb_i}(z_0)).
\end{align*}
Integrating both sides of the above inequality, we obtain that
$$\|\ \frac{\delta}{as}N_{W^{\alpha}(hb_i\tilde\phi_J(\tilde f))}(r)\ge \sum_{i=1}^q(N_{Q_i(\tilde f)}(r)-N^{[tu-1]}_{Q_{i}(\tilde f)}(r))+o(T_f(r)).$$
From this inequality and (\ref{3.10}) with a note that $\|\ T_F(r)=LT_f(r)+o(T_f(r))$, we have
\begin{align}\label{3.16}
\|\ (q-\frac{tuL\delta}{ads})T_f(r)\le\sum_{i=1}^q\frac{1}{d}N^{[tu-1]}_{Q_i(f)}(r)+\frac{(tu-1)tu}{2}\Gamma(r)+o(T_f(r)).
\end{align}

We choose $L:=(n+1)d+2^{n+1}\delta(n+1)^2I(\epsilon^{-1})d$ and have some following estimate:
\begin{align*}
a&=\sum_{\sigma({\mathbf i}_k)\le\frac{L}{d}}m^I_ki_{sk}\ge \sum_{\sigma({\mathbf i}_k)\le\frac{L}{d}-n}m^I_ki_{sk}=\dfrac{d^n}{n+1}\sum_{\sigma({\mathbf i}_k)\le\frac{L}{d}-n}\sum_{s=1}^{n+1}i_{sk}\\
&=\dfrac{d^n}{n+1}\cdot \binom{\frac{L}{d}}{n}\cdot \left(\dfrac{L}{d}-n\right)=d^n\binom{\frac{L}{d}}{n+1},\\
\dfrac{uL}{da}&\le \dfrac{\binom{L+n}{n}L}{d^{n+1}\binom{\frac{L}{d}}{n+1}}=(n+1)\prod_{i=1}^n\dfrac{L+i}{L-(n-i+1)d}\\
&\le (n+1)\bigl (\dfrac{L}{L-(n+1)d}\bigl)^n=(n+1)\left (1+\dfrac{(n+1)d}{L-(n+1)d}\right )^n\\
&\le (n+1)\left (1+\dfrac{2^n(n+1)d}{L-(n+1)d}\right) \le n+1+\dfrac{\epsilon}{2\delta},\\
\dfrac{tuL}{das}&\le (1+\dfrac{\epsilon}{3(n+1)\delta})(n+1+\dfrac{\epsilon}{2\delta})\le n+1+\frac{\epsilon}{\delta}.
\end{align*}
Combining this inequality with (\ref{3.16}), we get
\begin{align}
\label{3.18}
\|\ \left (q-\delta(n+1)-\epsilon\right )T_f(r)\le \sum_{i=1}^q\frac{1}{d}N^{[tu-1]}_{Q_i(f)}(r)+\frac{(tu-1)tu}{2}\Gamma(r)+o(T_f(r)).
\end{align}
Here we note that:
\begin{itemize}
\item $p_0:=\left [\dfrac{B-1}{\log (1+\frac{\epsilon}{3(n+1)\delta})}\right ]^2\le \left [\dfrac{\binom{L+n}{n}(\binom{L+n}{n}-1)\binom{q}{n}-1}{\log (1+\frac{\epsilon}{3(n+1)\delta})}\right ]^2,$
\item $tu-1\le\binom{L+n}{n}\binom{B+p}{B-1}-1\le \binom{L+n}{n}p^{B-1}-1\le \binom{L+n}{n}p_0^{\binom{L+n}{n}(\binom{L+n}{n}-1)\binom{q}{n}-2}-1=L_0.$
\end{itemize}
By these estimates and by (\ref{3.18}), we obtain
\begin{align*}
\|\ (q-\delta(n+1)-\epsilon)T_f(r)\le \sum_{i=1}^q\frac{1}{d}N^{[L_0]}_{Q_i(f)}(r)+\frac{L_0(L_0+1)}{2}\Gamma(r)+o(T_f(r)).
\end{align*}
The theorem is proved.
\end{proof}

\section{Proof of Theorems \ref{1.2} and \ref{1.3}}
 In order to prove Theorems \ref{1.2} and \ref{1.3} we need the following.
\begin{definition}[{see \cite{Q20}}]\label{new-1}
Let $V$ be a vector space over a field $\mathcal K$. Let $\mathcal A$ be a nonempty subset of $V$. The set $\mathcal A$ is said to be non-subdegenerate if it satisfies
$$(\mathcal A_1)_{\mathcal K}\cap (\mathcal A\setminus\mathcal A_1)_{\mathcal K}\ne\{0\}\ \forall\ \varnothing\ne\mathcal A_1\subsetneq\mathcal A.$$
\end{definition}
Here, by the notation $(S)_{\mathcal K}$ we denote the linear span over $\mathcal K$ of the subset $S$ of $V$. 
\begin{lemma}[{see \cite{Q20}}]\label{new-2}
	Let $\mathcal A=\{v_1,v_2,\ldots ,v_q\}\ (q\ge 2)$ be a subset of an $\mathcal K$-vector space $V$, which is non-subdegenerate. Then there  exist subsets $I_1,\ldots ,I_k$ of $\{1,\ldots ,q\}$ such that:
	
	$\mathrm(i)$\ \ $\{v_j\ |\ j\in I_1\}$ is minimal, and $\{v_j\ |\ j\in I_i\}$ is linear independent over $\mathcal {K}\ (2\le i \le k),$ 
	
	$\mathrm(ii)$\ \ $\left(\{v_j: j\in \bigcup_{i=1}^kI_i\}\right)_{\mathcal K}=\left (\mathcal A\right )_{\mathcal K}$ and $I_i\cap I_j=\emptyset\ (i\ne j)$,
	
	$\mathrm(iii)$\ \ For each $2\le i\le k,$ there exist meromorphic functions $c_{i\alpha}\in \mathcal {K}$ $(\alpha\in\bigcup_{j=1}^{i}I_j)$  such that $c_{i\alpha}\ne 0$ for all $\alpha\in I_i$ and 
	$$\sum_{\alpha\in I_1\cup\cdots\cup I_i}c_{i\alpha} v_\alpha=0.$$ 
	Moreover, $n_1+n_2+\cdots +n_k=\rank_{\mathcal K}\mathcal A-1$, where $n_1=|I_1|-2$ and $n_t=|I_t|-1 \ (2\le t\le k).$
\end{lemma}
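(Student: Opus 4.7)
The plan is to construct the subsets $I_1,I_2,\ldots,I_k$ inductively, at each stage invoking the non-subdegeneracy hypothesis to produce a new dependence relation that extends the spanning set built so far.

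For the base step, I first observe that $\mathcal A$ itself is $\mathcal K$-linearly dependent: applying non-subdegeneracy to $\mathcal A_1=\{v_1\}$ gives a nonzero $v=cv_1=\sum_{j\ge 2}c_j v_j$, so $c\ne 0$ and we obtain a nontrivial relation among $v_1,\ldots,v_q$. Hence I take $I_1$ to be a minimal subset of $\{1,\ldots,q\}$ for which $\{v_\alpha:\alpha\in I_1\}$ is linearly dependent. Minimality forces every nontrivial dependence among these vectors to have all coefficients nonzero, yielding the required $c_{1\alpha}$ as well as $\rank_{\mathcal K}\{v_\alpha:\alpha\in I_1\}=|I_1|-1$.

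For the inductive step, suppose pairwise disjoint $I_1,\ldots,I_i$ have been built, and set $\mathcal A_i:=\{v_\alpha:\alpha\in\bigcup_{t\le i}I_t\}$. If $(\mathcal A_i)_{\mathcal K}=(\mathcal A)_{\mathcal K}$, I stop and let $k:=i$. Otherwise $\mathcal A_i\subsetneq\mathcal A$, so non-subdegeneracy furnishes a nonzero $v\in(\mathcal A_i)_{\mathcal K}\cap(\mathcal A\setminus\mathcal A_i)_{\mathcal K}$, which can be written $v=\sum_{\beta\in B}e_\beta v_\beta$ with all $e_\beta\ne 0$ and $B\subseteq\{1,\ldots,q\}\setminus\bigcup_{t\le i}I_t$. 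Among all such pairs $(B,v)$ I choose one with $|B|$ minimal and set $I_{i+1}:=B$. Writing $v=\sum_{\alpha\in\bigcup_{t\le i}I_t}d_\alpha v_\alpha$ (possible because $v\in(\mathcal A_i)_{\mathcal K}$) and setting $c_{i+1,\alpha}:=-d_\alpha$ on the old indices together with $c_{i+1,\beta}:=e_\beta$ on $I_{i+1}$ produces the dependence relation required by condition~(iii).

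The minimality of $|B|$ plays two roles. First, if $\{v_\beta:\beta\in B\}$ were linearly dependent via $\sum g_\beta v_\beta=0$ with some $g_{\beta_0}\ne 0$, then $v-(e_{\beta_0}/g_{\beta_0})\sum g_\beta v_\beta$ equals $v$, hence is still nonzero and in $(\mathcal A_i)_{\mathcal K}$, yet its support strictly excludes $\beta_0$, contradicting minimality. Second, if $(\{v_\beta:\beta\in B\})_{\mathcal K}\cap(\mathcal A_i)_{\mathcal K}$ had dimension at least~$2$, pick an independent $v'$ in this intersection; for a suitable $\lambda\in\mathcal K$ the element $v-\lambda v'$ is nonzero, still lies in the intersection, and has strictly smaller support in $B$, once again contradicting minimality. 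These two facts give linear independence of $\{v_\beta:\beta\in I_{i+1}\}$ together with $\dim\bigl((\{v_\beta:\beta\in I_{i+1}\})_{\mathcal K}\cap(\mathcal A_i)_{\mathcal K}\bigr)=1$, so the rank strictly increases by exactly $|I_{i+1}|-1$ at each step. The process therefore terminates after finitely many steps with $(\mathcal A_k)_{\mathcal K}=(\mathcal A)_{\mathcal K}$; summing the rank contributions $(|I_1|-1)+\sum_{t=2}^k(|I_t|-1)=\rank_{\mathcal K}\mathcal A$ rearranges to $n_1+\cdots+n_k=\rank_{\mathcal K}\mathcal A-1$. The main obstacle is this dual use of minimality: it is what simultaneously forces independence of each new block and the clean one-dimensional overlap with the previously built span, and this is the crux that makes the rank formula come out exactly as stated.
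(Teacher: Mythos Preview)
Your proof is correct. The paper itself does not prove this lemma but simply cites \cite{Q20}, so there is no in-paper argument to compare against; your inductive construction---choosing $I_1$ minimally dependent and then repeatedly invoking non-subdegeneracy to extend the span by a support-minimal block $I_{i+1}$---is the natural approach and is essentially what one finds in the cited source. One remark: the one-dimensional overlap $\dim\bigl((\{v_\beta:\beta\in I_{i+1}\})_{\mathcal K}\cap(\mathcal A_i)_{\mathcal K}\bigr)=1$ that you establish via minimality is precisely the property the paper records afterward in Remark~\ref{new5}(a) as a \emph{consequence} of the rank formula; your route derives it directly and then reads off the rank count from it, which is arguably the cleaner direction.
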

Here, by the notation $|S|$ we denote the cardinality of the set $S$. 

\begin{remark}\label{new5}{\rm (a) We also note that:
\begin{align*}
1+n_1&=\dim\left(\{v_j: j\in I_1\}\right)_{\mathcal K},\\ 
1+n_1+n_2&= \dim\left(\{v_j: j\in I_1\}\right)_{\mathcal K}+\dim\left(\{v_j: j\in I_2\}\right)_{\mathcal K}-1\\
&\ge \dim\left(\{v_j: j\in I_1\cup I_2\}\right)_{\mathcal K},\\
\cdots&\cdots\\
1+\sum_{i=1}^kn_k&\ge \dim\left(\{v_j: j\in I_1\cup\cdots\cup I_k\}\right)_{\mathcal K}=\rank_{\mathcal K}\mathcal A.
\end{align*}
Then the inequality $n_1+n_2+\cdots +n_k=\rank_{\mathcal K}\mathcal A-1$ follows that
$$ \dim\left(\left(\{v_j: j\in I_1\cup\cdots\cup I_s\}\right)_{\mathcal K}\cap \left(\{v_j: j\in I_{s+1}\}\right)_{\mathcal K}\right)=1\ (1\le s\le k-1).$$

(b) In the case where $\rank_{\mathcal K}\mathcal A\ge 2$: if $\sharp I_t=1$ for some $t>1$ then we remove the subset $I_t$ from the above partition and hence we may suppose that $\sharp I_t\ge 2\ \forall t\ge 2$; if $\sharp I_1=2$ then we remove an index from $I_1$ and add the another one into $I_2$ to make a new subset $I_1$, and hence we may suppose that $\sharp I_1\ge 3.$}
\end{remark} 

Developing the technique of Y. Liu in \cite{L} (also see \cite{Y}) to avoid using the lemma on logarithmic derivative, in \cite{Q21} we proved the following results, which is very essential to establish second main theorem with good truncation level for holomorphic maps on complex discs with finite growth index and slowly moving hyperplanes. 
\begin{lemma}[{cf. \cite[Lemma 2.3]{Q21}}]\label{3.1}
Let  $f:\Delta(R)\rightarrow\P^n(\C)$ be a holomorphic map, and let $\{a_i\}_{i=0}^{q-1}$ be $q$ moving hyperplanes in general position with $\rank_{\mathcal K}\{(\tilde f,\tilde a_i); 0\le i\le q-1\}=\rank_{\mathcal K}(\tilde f)$, where $\mathcal K=\mathcal K_{\{a_i\}_{i=0}^{q-1}}$. Assume that there exists a partition $\{0,\ldots,q-1\}=I_1\cup I_2\cup\cdots\cup I_l$ satisfying:\\
$\mathrm{(i)}$ \ $\sharp I_1\ge 3$, $\sharp I_t\ge 2\ (2\le t\le l)$, $\{(\tilde f,\tilde a_i)\}_{i\in I_1}$ is minimal over $\mathcal K$, $\{(\tilde f,\tilde a_i)\}_{i\in I_t}$ is linearly independent over $\mathcal {R}\ (2\le t \le l), $ \\
$\mathrm{(ii)}$ \ for any $2\le t\le l,i\in I_t,$ there exist meromorphic functions $c_i\in \mathcal {R}\setminus\{0\}$ such that 
$$\sum_{i\in I_t}c_i(\tilde f,\tilde a_i)\in \biggl(\bigcup_{j=1}^{t-1}\bigcup_{i\in I_j}(\tilde f,\tilde a_i) \biggl)_{\mathcal {R}}.$$
Then we have
$$\|_\gamma\ T_f(r)\le \sum_{i=1}^{l}\sum_{j\in I_i}N^{[n_0]}_{(\tilde f,a_j)}(r)+\frac{n_0(q-1)}{2}\left ((1+\varepsilon)\log\gamma (r)+\varepsilon\log r\right )+S(r),$$
where $S(r)= O(\log T_f(r)+\max_{0\le i \le q-1}T_{a_i}(r))$, $n_1=\sharp I_1-2$, $n_t=\sharp I_t-1$ for $t=2,\ldots,l$ and $n_0=\max_{1\le i\le l}n_i$.
\end{lemma}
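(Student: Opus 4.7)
The plan is to decompose the proof along the given partition $\{I_1,\ldots,I_l\}$. For each block $I_t$, I will construct an auxiliary holomorphic map $F_t:\Delta(R)\to\P^{n_t}(\C)$ from the relation supplied by (i) or (ii), apply a second main theorem (in moving-hyperplane form) to each $F_t$ against $\sharp I_t$ hyperplanes in general position, and combine the resulting block estimates using the rank assumption $\rank_{\mathcal K}\{(\tilde f,\tilde a_i)\}=\rank_{\mathcal K}(\tilde f)$. The essential technical input is the Liu-type Wronskian argument of \cite{L}, which replaces the (unavailable in sharp form on $\Delta(R)$) logarithmic derivative lemma and keeps the estimate in the $\gamma(r)$-form of Theorem \ref{2.1}.

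For the first block, the minimality of $\{(\tilde f,\tilde a_i)\}_{i\in I_1}$ over $\mathcal K$ yields a relation $\sum_{i\in I_1}c_i(\tilde f,\tilde a_i)=0$ with every $c_i\in\mathcal K\setminus\{0\}$, and every proper subsystem $\mathcal K$-linearly independent. Fixing $i_0\in I_1$ and taking (after clearing common factors) $\tilde F_1=\bigl(c_i(\tilde f,\tilde a_i)\bigr)_{i\in I_1\setminus\{i_0\}}$ as a reduced representation, I obtain a map $F_1:\Delta(R)\to\P^{n_1}(\C)$ with $n_1=\sharp I_1-2$, linearly nondegenerate over $\mathcal K$. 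The $\sharp I_1$ hyperplanes $\{x_i=0\}$ together with $\{\sum_i x_i=0\}$ (the latter coming from the relation, cutting out $(\tilde f,\tilde a_{i_0})=0$) are in general position, and since $\sharp I_1-n_1-1=1$, the moving version of Theorem \ref{2.2}, reduced to the $\C$-nondegenerate case by a Steinmetz/Shirosaki device, gives
\begin{align*}
\bigl\|_\gamma\;T_{F_1}(r)\le \sum_{i\in I_1}N^{[n_1]}_{(\tilde f,a_i)}(r)+\frac{n_1(n_1+1)}{2}\bigl((1+\varepsilon)\log\gamma(r)+\varepsilon\log r\bigr)+S(r).
\end{align*}

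For each $t\ge 2$, condition (ii) provides a relation $\sum_{i\in I_t}c_i(\tilde f,\tilde a_i)=h_t$, where $h_t\in\bigl(\{(\tilde f,\tilde a_i):i\in I_1\cup\cdots\cup I_{t-1}\}\bigr)_{\mathcal K}$ and every $c_i\ne 0$. The $\mathcal K$-linear independence of $\{(\tilde f,\tilde a_i)\}_{i\in I_t}$ lets me use $(c_i(\tilde f,\tilde a_i))_{i\in I_t}$ as a reduced representation of a map $F_t:\Delta(R)\to\P^{n_t}(\C)$ with $n_t=\sharp I_t-1$; the coordinate hyperplanes together with the hyperplane furnished by $h_t$ supply $\sharp I_t$ hyperplanes in general position, and the same SMT produces the analogous bound with $n_1$ replaced by $n_t$. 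The rank condition ensures $T_f(r)\le\sum_{t=1}^{l}T_{F_t}(r)+S(r)$, since the collected coordinates of the $F_t$ $\mathcal K$-span the same space as $\tilde f$. Summing the block estimates, bounding each $n_t\le n_0$, and using the arithmetic identity
$$(n_1+1)+\sum_{t\ge 2}(n_t+1)=(\sharp I_1-1)+\sum_{t\ge 2}\sharp I_t=q-1,$$
the total error coefficient is $\frac{1}{2}\sum_{t=1}^l n_t(n_t+1)\le\frac{n_0(q-1)}{2}$, which matches the stated bound after absorbing the $S(r)$-terms.

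The main obstacle will be passing from $\mathcal K$-nondegeneracy of each $F_t$ to an SMT estimate with the sharp truncation level $n_t$: this is exactly where the Liu-type Wronskian technique adapted to the $\gamma(r)$-form of Theorem \ref{2.1} is indispensable, because the usual logarithmic derivative lemma does not deliver the required sharp error on a disc with positive growth index. A secondary, but routine, nuisance is controlling the union of the finitely many $\gamma$-exceptional sets produced by the $l$ applications of the SMT, which stays $\gamma$-admissible because each individual exceptional set $E$ satisfies $\int_E\gamma(r)\,dr<\infty$ and $l$ is finite.
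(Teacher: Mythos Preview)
Your block decomposition and the construction of the auxiliary maps $F_t$ match the paper's approach, but the combination step contains a genuine gap: the inequality $T_f(r)\le\sum_{t=1}^{l}T_{F_t}(r)+S(r)$ does not follow from the rank condition as you claim. When you clear the common factor $h_t$ (with $\nu_{h_t}=\min_{i}\nu_{c_i(\tilde f,\tilde a_i)}$) to make $\tilde F_t$ reduced, you have $\max_{i\in I_t}|(\tilde f,\tilde a_i)|\asymp |h_t|\cdot\|\tilde F_t\|$ up to $\mathcal C_{\mathcal K}$-factors, so the rank condition only gives $T_f(r)\le\sum_t T_{F_t}(r)+\sum_t N_{h_t}(r)+S(r)$, and each $N_{h_t}(r)$ is comparable to $T_f(r)$, not to $S(r)$. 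A second, related omission: for $t\ge 2$ the coordinate hyperplanes together with the extra one are $\sharp I_t+1=n_t+2$ in number (not $\sharp I_t$), and the SMT applied to $F_t$ produces on the right an additional term $N^{[n_t]}_{P_t/h_t}(r)$ coming from the hyperplane $\{\sum x_i=0\}$; this term is absent from your block estimate and does not match any target $N^{[n_0]}_{(\tilde f,a_j)}$.

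The paper does not sum the $T_{F_t}$. Instead it invokes a chained inequality (from \cite{Q20}) of the shape
\[
T_f(r)\le \sum_{t=1}^{l}\sum_{j\in I_t}N^{[n_0]}(r,\nu_{(\tilde f,a_j)}-\nu_t)+N(r,\nu_1-\nu)-\sum_{t\ge 2}N\bigl(r,\max\{0,\nu_{P_t/h_t}-n_0\}\bigr)+\text{error}+S(r),
\]
with $\nu_t=\min_{j\in I_t}\nu_{(\tilde f,a_j)}$, and then performs a pointwise telescoping analysis (split into the cases $\nu_1(z)\le n_0$ and $\nu_1(z)>n_0$) showing that the overflow term $N(r,\nu_1-\nu)$ is absorbed by the subtracted $N(r,\max\{0,\nu_{P_t/h_t}-n_0\})$ terms together with the slack between the shifted and unshifted truncated counts. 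That cancellation is the crux of the lemma (and is exactly what the paper's ``correction'' fixes); the obstacle you flag about passing from $\mathcal K$-nondegeneracy to sharp truncation is secondary by comparison.
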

Here, as usual $(\tilde f,a)$ and $(\tilde f,\tilde a)$ stand for $a(\tilde f)$ and $\tilde a(\tilde f)$ respectively (once consider $a$ as a moving hypersurfaces of degree 1). However, in the proof of \cite[Lemma 2.3]{Q21} there is a minor gap about computing the multiplicity (the second inequality in Case 2 of this proof does not hold). Then we need to modify some lines to correct this proof. For the sake of completeness, we will give the sketch of the proof to introduce the necessary notations for correcting.
\begin{proof}[{\sc Sketch Proof with Correction}]
Without loss of generality, we assume that  $I_i=\{t_{i-1}+1,\ldots ,t_{i}\}\ \ (1\le i \le l)$, where $t_0=-1.$ By the minimality over $\mathcal K$ of the set $\{(f,\tilde a_i)\}_{i\in I_1}$, it follows that there exist functions $c_i\in\mathcal K\setminus\{0\}\ (1\le i\le t_{1})$ such that 
$$(\tilde f,\tilde a_{0} )=\sum_{i=1}^{t_{1}}c_{{i}}(\tilde f, \tilde a_i).$$

We consider the holomorphic map $F^1:\Delta(R)\rightarrow\P^{\sharp I_1-2}(\C)$ given by a reduced representation 
$$F^1 =\left (\frac{c_{1}}{h_1} (\tilde f, \tilde a_1),\ldots,\frac{c_{t_1}}{h_{1}}(\tilde f, \tilde a_{t_1})\right),$$
where $h_1$ is a nonzero meromorphic function on $\Delta(R)$ with 
$$\nu_{h_1}(z)=\min_{1\le i\le t_1}\nu_{c_i(f ,\tilde a_i)}(z)\ \forall z\in\Delta (R).$$ 

For each $i\ (2\le i\le l)$, by the assumption (ii), there exists a nonzero function $P_i\in \biggl(\bigcup_{j=1}^{t_{i-1}}(\tilde f,\tilde a_j) \biggl)_{\mathcal {K}}$
such that
$$ P_i=\sum_{j=t_{i-1}+1}^{t_i}c_j(\tilde f,\tilde a_j).$$
We consider the holomorphic map $F^i:\Delta(R)\to\P^{\sharp I_i-1}$ with a reduced representation
$$ F^i=(\dfrac{c_{t_{i-1}+1}}{h_i}(\tilde f,\tilde a_{t_{i-1}+1}),\cdots,\dfrac{c_{t_{i}}}{h_i}(\tilde f,\tilde a_{t_{i}})) \text{ if } \sharp I_i>1,$$
and $$ F^i=(\dfrac{1}{h_i}P_i,\dfrac{c_{t_{i}}}{h_i}(\tilde f,\tilde a_{t_{i}})) \text{ if } \sharp I_i=1,$$
where $h_i$ is a meromorphic function on $\Delta(R)$ with 
$$\nu_{h_i}(z)=\min_{t_{i-1}+1\le j\le t_i}\nu_{c_j(f ,\tilde a_j)}(z)\ \forall z\in\Delta (R).$$ 
As \cite[Inequality (5)]{Q20} we have
\begin{align}\label{new4}
\begin{split}
T_f(r)\le&\sum_{i=1}^{l}\sum_{j\in I_i}N^{[n_0]}(r,\nu_{(\tilde f,a_j)}-\nu_i)+N(r,\nu_1-\nu)+S(r)\\
&\ \ +\frac{n_0(q-1)}{2}\left ((1+\varepsilon)\log\gamma (r)+\varepsilon\log r\right )-\sum_{i=2}^lN(r,\max\{0,\nu_{P_i/h_i}-n_0\}),
\end{split}
\end{align} 
where $\nu_i(z)=\min_{t_{i-1}+1\le j\le t_i}\nu_{(f,a_j)}(r)$ and $\nu(z)=\min_{2\le j\le t_l}\nu_{(f,a_j)}(z)$ for all $z\in\Delta (R)$.

Now, for a fixed point $z\in\Delta (R)$ which is neither a pole nor a zero of any $c_j,a_{i0}\ (\forall i,j)$, without loss of generality we may assume that $\left (\frac{c_1}{h_1}(\tilde f,\tilde a_1)\right)(z)\ne 0$ and $\left(\frac{c_{t_i+1}(\tilde f,a_{t_i+1})}{h_i}\right)(z)\ne 0\ (2\le i\le l)$. We distinguish the following two cases:

Case 1: $\nu_1(z)\le n_0$. As in the Case 1 of the proof of \cite[Lemma 2.3]{Q20}, we have
\begin{align*}
\sum_{i=1}^{l}\sum_{j\in I_i}\min\{n_0,\nu_{(\tilde f,a_j)}(z)-\nu_i(z)\}&+\nu_1(z)-\sum_{i=2}^l\max\{0,\nu_{P_i/h_i}(z)-n_0\}\\
&\le \sum_{j=0}^{q-1}\min\{n_0,\nu_{(\tilde f,a_j)}(z)\}.
\end{align*}

Case 2: $\nu_1(z)> n_0$. Let $p$ be the smallest index such that $\nu_p(z)\le n_0$. We have
\begin{align*}
&\sum_{i=1}^{l}\sum_{j\in I_i}\min\{n_0,\nu_{(\tilde f,a_j)}(z)-\nu_i(z)\}+\nu_1(z)-\sum_{i=2}^l\max\{0,\nu_{P_i/h_i}(z)-n_0\}\\ 
&\le \sum_{i=1}^{l}\sum_{j\in I_i}\min\{n_0,\nu_{(\tilde f,a_j)}(z)\}-\sum_{i=1}^p\min\{n_0,\nu_i(z)\}+\nu_1(z)\\
&\ \ -\sum_{i=2}^{p}\max\{0,\nu_{i-1}(z)-\nu_{i}(z)-n_0\}\\ 
&\le \sum_{i=1}^{l}\sum_{j\in I_i}\min\{n_0,\nu_{(\tilde f,a_j)}(z)\}-pn_0+\nu_1(z)-\sum_{i=2}^{p}(\nu_{i-1}(z)-\nu_{i}(z)-n_0)\\
&=\sum_{i=1}^{l}\sum_{j\in I_i}\min\{n_0,\nu_{(\tilde f,a_j)}(z)\}-n_0+\nu_p(z)\le \sum_{i=1}^{l}\sum_{j\in I_i}\min\{n_0,\nu_{(\tilde f,a_j)}(z)\}.
\end{align*}

Hence, we have
\begin{align*}
\sum_{i=1}^{l}\sum_{j\in I_i}N^{[n_0]}(r,\nu_{(\tilde f,a_j)}-\nu_i)&+N(r,\nu_1-\nu)+\sum_{i=2}^l(N^{[n_0]}_{P_i/h_i}(r)-N_{P_i/h_i}(r))\\
&\le \sum_{j=0}^{q-1}N^{[n_0]}(r,\nu_{(\tilde f,a_j)})+S(r).
\end{align*}
This yields that
\begin{align*}
T_f(r)\le  \sum_{i=0}^{q-1}N^{[n_0]}_{(\tilde f,a_i)}(r)+\frac{n_0(q-1)}{2}\left ((1+\varepsilon)\log\gamma (r)+\varepsilon\log r\right )+S(r).
\end{align*}
The lemma is proved.
\end{proof}

\begin{proof}[\textbf{\sc Proof of Theorem \ref{1.2}}] 
By replacing $Q_i$ with $Q_i^{d/d_i}$ if necessary, we may suppose that all $Q_i$ are of the same order $d$. Denote by $V^d$ the vector space of all homogeneous polynomials of degree $d$ in $\mathcal K[x_0,\ldots ,x_n]$ (include the zero polynomial). Set $V^d_f=\{Q(\tilde f)\ ;\ Q\in V^d\}$, which is a $\mathcal K$-vector space. It is seen that $\dim V^d_f\le \dim V^d=\binom{n+d}{n}=N+1$. 

We denote by $\mathcal I$ the set of all permutations of the set $\{1,2,\ldots,q\}$. For each element $I=(i_1,\ldots,i_q)\in\mathcal I$, we set
$$N_I=\{r\in\R^+;N^{[N]}_{Q_{i_1}(\tilde f)}(r)\le\cdots\le N^{[N]}_{Q_{i_q}(\tilde f)}(r)\}.$$

Consider an element $I=(i_1,\ldots,i_q)$ of $\mathcal I$, for instance $I=(1,\ldots ,q)$. Set $A=\{1,\ldots ,n(N-n+2)+1\}$. 
We cosider a partition $B_0,\ldots ,B_l$ of $A$ satisfying:

(1) $(Q_j(\tilde f): j\in B_i)_{\mathcal K}\cap (Q_j(\tilde f): j\in A\setminus B_i)_{\mathcal K}=\{0\}$.

(2) For each $0\le i\le l$, $\{Q_j(\tilde f); j\in B_i\}$ is non-subdegenerate.
 
\noindent
Without loss of generality, we assume that $|B_0|\ge |B_1|\ge \cdots\ge |B_l|$. 

We will show that $|B_0|\ge n+1$. Indeed, suppose contrarily that $|B_0|\le n$. We consider the following two cases.
\begin{itemize}
\item Case 1: $d=1$. Then $N=n$ and $|A|=2n+1$. Since $|A\setminus B_0|\ge n+1$, it is clear that
$$(Q_i(\tilde f); i\in B_0)_{\mathcal K}\cap (Q_i(\tilde f); i\in A\setminus B_0)_{\mathcal K}= (Q_i(\tilde f); i\in B_0)_{\mathcal K}\ne \{0\}.$$
This is a contradiction.
\item Case 2: $d\ge 2$. Consider the function $g(t)=\frac{n(N-n+2)+1}{t}+t$. We have
$$ g'(t)=1-\frac{n(N-n+2)+1}{t^2}\le \frac{n^2-n(N-n+2)-1}{n^2}<0\ \forall t\in [1;n]$$
(note that $N\ge 2n$). Then $g(t)$ is decreasing on $[1;n]$. Now, for each $1\le i\le l$ we take an index $j_i\in B_i$. Then $\{Q_{j_1},\ldots ,Q_{j_l}\}\cup \{Q_j: j\in B_0\}$ is linear independent. Therefore, 
\begin{align*}
N+1\ge l+|B_0|&\ge \frac{n(N-n+2)+1}{|B_0|}-1+|B_0|\\
&\ge \frac{n(N-n+2)+1}{n}-1+n=N+1+\frac{1}{n}.
\end{align*}
This is a contradiction. 
\end{itemize}
Then, from the above two cases we must have $|B_0|\ge n+1$.

Since $B_0$ is non-subdegenerate, we obtain subsets $I_1,\ldots ,I_k \ (I_t\cap I_s=\emptyset, \forall s\ne t)$ of $B_0$ with numbers $n_1,n_1,\ldots ,n_k$ satisfying all conclusions of Lemma \ref{3.1}. By Remark \ref{new5}(a), we may assume that $n_i\ge 1\ \forall 1\le i\le k$. It is clear that $n_i\le N\ \forall i=1,\ldots,k$.

If $\sharp (I_1\cup\cdots\cup I_{k})\le n+1$, then we set $k_0=k$.

If $\sharp (I_1\cup\cdots\cup I_{k})>n+1$, we denote by $k_0$ the smallest index such that $\left |I_1\cup\cdots\cup I_{k_0}\right |\ge n+1$. Then, we see that $\left\{Q_j; j\in I_1\cup\cdots\cup I_{k_0-1}\right\}$ is linear independent. From Remark \ref{new5}(a), we easily see that
$$\dim\left ((Q_j; j\in I_1\cup\cdots\cup I_{k_0-1})_{\mathcal K}\cap (Q_j; j\in I_{k_0})_{\mathcal K}\right )\le 1.$$
Therefore
$$\sharp(\bigcup_{t=1}^{k_0}I_t)=\dim(Q_j; j\in I_1\cup\cdots\cup I_{k_0-1})_{\mathcal K}+\dim(Q_j; j\in I_{k_0})_{\mathcal K}=p+1\le N+2.$$
Consider a holomorphic map $F:\Delta(R)\rightarrow\P^{p+1}(\C)$ with a representation $\tilde F=(Q_j(\tilde f);j\in I_1\cup\cdots\cup I_{k_0})$.
By Lemma \ref{3.1}, for every $r\in N_I$, we have
\begin{align*}
\bigl\|\ \ &dT_f(r)=T_F(r)+o(T_f(r))\le\sum_{i=n(N-n+2)-N}^{n(N-n+2)+1}N^{[N]}_{Q_j(f)}(r)+\frac{N(N+1)}{2}\Gamma(r)+o(T_f(r))\\
&\le \dfrac{N+2}{q-n(N-n+2)+N+1}\sum_{i=n(N-n+2)-N}^{q}N^{[N]}_{Q_i(f)}(r)+\frac{N(N+1)}{2}\Gamma(r)+o(T_f(r))\\
&\le\dfrac{N+2}{q-n(N-n+2)+N+1}\sum_{i=1}^{q}N^{[N]}_{Q_i(f)}(r)+\frac{N(N+1)}{2}\Gamma(r)+o(T_f(r)),
\end{align*}
where $\Gamma(r)=\log\gamma(r)+\epsilon\log r$. We see that $\bigcup_{i\in\mathcal I}N_I=\R^+$ and the above inequality holds for every $r\in N_I, I\in\mathcal I$. This yields that
$$ \bigl\|\ \ T_f(r)\le\dfrac{N+2}{q-n(N-n+2)+N+1}\sum_{i=1}^{q}N^{[N]}_{Q_i(f)}(r)+\frac{N(N+1)}{2}\Gamma(r)+o(T_f(r)).$$
The theorem  is proved.
\end{proof}

\begin{proof}[{\sc Proof of Theorem \ref{1.3}}]
By repeating the argument as in the proof of Theorem \ref{1.2}, it suffices to prove the theorem for the case where all  $Q_i$ have the same degree.

Consider arbitrary $(N+2)$  polynomials $Q_{i_1},\ldots,Q_{i_{N+2}}\ (1\le i_j\le q)$. We see that $n+1\le \dim (Q_{i_j}\ ;\ 1\le j\le N+2)_{\mathcal K}\le N+1<N+2$. Then the set $\{Q_{i_1},\ldots,Q_{i_{N+2}}\}$ is of rank at least $n+1$ and is linearly dependent over $\mathcal K$. Hence, there exists a minimal subset over $\mathcal K$, for instance that is $\{Q_{i_1},\ldots,Q_{i_{t}}\}$, of  $\{Q_{i_1},\ldots,Q_{i_{N+2}}\}$ with $n+2\le t\le N+2$. Then, there exist nonzero functions 
$c_j\ (1\le j\le t)$ in $\mathcal K$ such that
$$ c_1Q_{i_1}+\cdots +c_tQ_{i_t}=0. $$
Since $Q_{i_1},\ldots,Q_{i_t}$ are in weakly general position, $t\ge n+2.$ Denote by $F$ the meromorphic mapping of $\Delta(R)$ into $\P^{t-2}(\C)$ which has a representation $\tilde F =(c_1Q_{i_1}( f),\ldots ,c_{t-1}Q_{i_{t-1}}( f))$. Since $\{Q_{i_1},\ldots,Q_{i_{t}}\}$ is minimal, $F$ is linearly nondegenerate over $\mathbb{C}$. Denote by $H_i$ the hyperplane in $\P^{t-2}(\C)$ defined by the linear from $H_i=x_{i}$ $(0\le i\le t-2)$, and $H_t$ is the hyperplane defined by the linear form $H_t=-x_0-\cdots -x_{t-2}$. Applying the second main theorem for these fixed hyperplanes, we get
\begin{align*}
\|\ \ dT_f(r)&=T_F(r)\le\sum_{j=1}^tN^{[t-2]}_{c_jQ_{i_j}(f)}(r)+\frac{t(t+1)}{2}\Gamma (r)+o(T_f(r))\\
&\le\sum_{j=1}^{N+2}N^{[N]}_{Q_{i_j}(f)}(r)+\frac{N(N+1)}{2}\Gamma (r)+o(T_f(r)),
\end{align*}
where $\Gamma(r)=\log\gamma(r)+\epsilon\log r$. Taking summing-up of both sides of this inequality over all combinations $\{i_1,\ldots,i_{N+2}\}$ with 
$1\le i_1< \cdots<i_{N+2}\le q,$ we have
$$\|\ \ dT_f(r)\le\frac{N+2}{q}\sum_{i=1}^{q}N^{[N]}_{Q_i(f)}(r)+\frac{N(N+1)}{2}\Gamma (r)+o(T_f(r)).$$
The theorem is proved.
\end{proof}

\section{Proof of Theorem \ref{1.4}}

We need the following result on the Nochka's weights for families of hypersurfaces in subgeneral position with respect to a projective varieties, which is a generalization of the classical results of Nochka \cite{Noc83} (see \cite{No05} for detail).
\begin{lemma}[{see \cite[Lemma 3.2]{QA}}]\label{4.1}
Let $V$ be a complex projective subvariety of $\mathbb P^n(\mathbb C)$ of dimension $k\ (k\le n)$. Let $Q_1,\ldots,Q_q$ be $q\ (q>2N-k+1)$ hypersurfaces in $\mathbb P^n(\mathbb C)$ in $N$-subgeneral position with respect to $V$ of the common degree $d.$ Then there are positive rational constants $\omega_i\ (1\le i\le q)$ satisfying the following:

i) $0<\omega_i \le 1,\  \forall i\in\{1,\ldots,q\}$,

ii) Setting $\tilde \omega =\max_{j\in Q}\omega_j$, one gets
$$\sum_{j=1}^{q}\omega_j=\tilde \omega (q-2N+k-1)+k+1.$$

iii) $\dfrac{k+1}{2N-k+1}\le \tilde\omega\le\dfrac{k}{N}.$

iv) For $R\subset \{1,\ldots,q\}$ with $\sharp R = N+1$, then $\sum_{i\in R}\omega_i\le k+1$.

v) Let $E_i\ge 1\ (1\le i \le q)$ be arbitrarily given numbers. For $R\subset \{1,\ldots,q\}$ with $\sharp R = N+1$,  there is a subset $R^o\subset R$ such that $\sharp R^o=\rank \{Q_i\}_{i\in R^o}=k+1$ and 
$$\prod_{i\in R}E_i^{\omega_i}\le\prod_{i\in R^o}E_i.$$
\end{lemma}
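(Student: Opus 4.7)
The plan is to reduce the hypersurface problem to a linear SMT via a $d$-uple Veronese–type embedding of $V$ and then invoke the Nochka weights supplied by Lemma~\ref{4.1}. First, by replacing each $Q_i$ with $Q_i^{d/d_i}$ I may assume all $Q_i$ share the common degree $d$; the truncation comparison $\tfrac{1}{d}N^{[H_V(d)-1]}_{Q_i^{d/d_i}(f)}(r)\le \tfrac{1}{d_i}N^{[H_V(d)-1]}_{Q_i(f)}(r)$ preserves the shape of the counting term on the right. Fix a $\C$-basis $\{[\phi_1],\dots,[\phi_M]\}$ of $I_d(V)$, where $M=H_V(d)$, and pick a meromorphic $h$ on $\Delta(R)$ such that $\tilde F=(h\phi_1(\tilde f),\dots,h\phi_M(\tilde f))$ is a reduced representation of a holomorphic map $F:\Delta(R)\to\P^{M-1}(\C)$. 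Nondegeneracy of $f$ over $I_d(V)$ forces $F$ to be linearly nondegenerate over $\C$, and restriction of the $d$-Veronese to $V$ gives $T_F(r)=dT_f(r)+O(1)$. Each $Q_i$ corresponds to a linear form $L_i$ on $\P^{M-1}(\C)$ with $L_i(\tilde F)=hQ_i(\tilde f)$, so $N^{[M-1]}_{L_i(F)}(r)=N^{[M-1]}_{Q_i(f)}(r)+o(T_f)$.

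I then apply Lemma~\ref{4.1} to obtain Nochka weights $\{\omega_i\}$ and a maximum value $\tilde\omega$. For each $z$, reorder so that $|L_{i_1}(\tilde F)(z)|\le\cdots\le|L_{i_q}(\tilde F)(z)|$ and put $R(z)=\{i_1,\dots,i_{N+1}\}$. The $N$-subgeneral position with respect to $V$ and a compactness argument make $\|\tilde F\|/|L_i(\tilde F)|$ uniformly bounded for $i\notin R(z)$; meanwhile property (v) of Lemma~\ref{4.1}, applied with $E_i=\|\tilde F\|/|L_i(\tilde F)|$, supplies $R^o(z)\subset R(z)$ of cardinality $k+1$ whose forms are linearly independent and satisfies
\begin{equation*}
\sum_{i=1}^q\omega_i\log\frac{\|\tilde F(z)\|}{|L_i(\tilde F)(z)|}\le\sum_{j\in R^o(z)}\log\frac{\|\tilde F(z)\|}{|L_j(\tilde F)(z)|}+O(1).
\end{equation*}
For each of the finitely many admissible $R^o$, extend $\{L_j:j\in R^o\}$ to a linearly independent family $\{L_{R^o,1},\dots,L_{R^o,M}\}$ of $M$ forms. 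Integrating in $\theta$, taking the maximum over $R^o$ inside the integrand, and applying Theorem~\ref{2.1} to $F$ with the finite union of all chosen forms (the $\gamma$ from the statement is admissible because $c_F\le c_f$, so $\int_0^R\gamma(r)\,dr=\infty$), together with the identity $\int_0^{2\pi}\log\|\tilde F\|/|L_i(\tilde F)|\,d\theta/(2\pi)=T_F(r)-N_{L_i(F)}(r)+O(1)$, gives
\begin{equation*}
\Bigl(\sum_i\omega_i-M\Bigr)T_F(r)\le\sum_i\omega_i N_{L_i(F)}(r)-N_W(r)+\tfrac{M(M-1)}{2}\Gamma(r)+o(T_F),
\end{equation*}
where $\Gamma(r)=\log\gamma(r)+\epsilon\log r$ and $W$ is the generalised Wronskian of $\tilde F$.

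Finally, a Nochka-weighted version of the truncation lemma replaces $\sum_i\omega_iN_{L_i(F)}-N_W$ by $\sum_i\omega_iN^{[M-1]}_{L_i(F)}+o(T_f)$. Substituting $T_F=dT_f+O(1)$, dividing by $d\tilde\omega$, and using $\omega_i/\tilde\omega\le 1$ together with the Nochka identity (ii) and the lower bound (iii) produces
\begin{equation*}
\frac{\sum_i\omega_i-M}{\tilde\omega}=(q-2N+k-1)+\frac{k+1-M}{\tilde\omega}\ge q-\frac{(2N-k+1)H_V(d)}{k+1},
\end{equation*}
since $M\ge k+1$ and $1/\tilde\omega\le(2N-k+1)/(k+1)$, which is exactly the coefficient in front of $T_f(r)$ in the theorem. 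The main obstacle I expect is the Nochka-weighted truncation step: at each point $z_0\in\supp L_i(\tilde F)$ one must localise to the appropriate $R^o(z_0)$ and compare the Wronskian divisor with the extended basis to show that $N_W$ dominates the pointwise weighted excess $\sum_i\omega_i(\nu_{L_i(F)}(z_0)-\nu^{[M-1]}_{L_i(F)}(z_0))$; because the extensions depend on $z_0$ only through finitely many combinatorial choices, the local errors patch together to a global $o(T_f(r))$ contribution.
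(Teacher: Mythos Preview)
Your proposal does not prove Lemma~\ref{4.1} at all: you \emph{invoke} Lemma~\ref{4.1} as a black box (``I then apply Lemma~\ref{4.1} to obtain Nochka weights'') and then proceed to derive the second main theorem of Theorem~\ref{1.4}. The statement you were asked to prove is the existence of the Nochka weights $\omega_i$ with properties (i)--(v); your argument assumes their existence and uses them. A genuine proof of Lemma~\ref{4.1} would require the combinatorial construction of the weights themselves, following Nochka's original argument \cite{Noc83,No05} adapted to the rank function $R\mapsto \rank\{[Q_i]:i\in R\}$ on $I_d(V)$, as carried out in \cite[Lemma~3.2]{QA}. The paper itself does not reprove Lemma~\ref{4.1}; it is quoted from \cite{QA} precisely because that construction is nontrivial and independent of the analytic content of the present paper.

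If your intention was actually to prove Theorem~\ref{1.4}, then your outline is essentially the paper's own proof: reduce to common degree, pass to the Veronese-type map $F$ via a basis of $I_d(V)$, use Lemma~\ref{4.1}(v) pointwise to replace the weighted product over all $q$ forms by an unweighted product over $k+1$ independent ones, integrate and feed into Theorem~\ref{2.1}, then apply the weighted truncation estimate and the algebraic identity (ii) together with the bound (iii) on $\tilde\omega$. The only cosmetic difference is that the paper works directly with $\|\tilde f\|^d$ and $|Q_i(\tilde f)|$ rather than naming the linear forms $L_i$ on $\P^{M-1}$, but the content is identical.
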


\begin{proof}[{\sc Proof of Theorem \ref{1.4}}]
As in the proof of Theorem \ref{1.1}, we may suppose that all $Q_i\ (i=1,\ldots,q)$ do have the same degree $d$.
It is easy to see that there is a positive constant $\beta$ such that $\beta \|\tilde f\|^d\ge |Q_i(\tilde f)|$ for every $1\le i\le q.$
Set $ Q:=\{1,\ldots ,q\}$. Let $\{\omega_i\}_{i=1}^q$ be as in Lemma \ref{4.1} for the family $\{Q_i\}_{i=1}^q$.  

Take a $\mathbb C$-basis $\{[A_i]\}_{i=1}^{H_V(d)}$ of $I_d(V)$, where $A_i\in H_d$. Since $f$ is nondegenerate over $I_d(V)$, it implies that the holomorphic map $F$ with the reduced representation $\tilde F=(A_1(\tilde f),\ldots,A_{H_V(d)}(\tilde f)\}$ is linearly independent over $\mathbb C$ and $dT_f(r)=T_F(r)+O(1)$. Denote by $W$ the generalized Wronskian of $\tilde F$
% , i.e., 
% $$W=\det ((A_1(\tilde f))^{(j)},\ldots,(A_{H_V(d)}(\tilde f))^{(j)})_{0\le j\le H_V(d)-1}.$$

Let $z$ be a fixed point. There exists $R\subset Q$ with $\sharp R=N+1$ such that $|Q_{i}(\tilde f)(z)|\le |Q_j(\tilde f)(z)|,\forall i\in R,j\not\in R$. Since $\bigcap_{i\in  R}Q_i=\varnothing$, there exists a positive constant $\alpha$ such that
$$ \alpha \|\tilde f\|^d(z)\le \max_{i\in R}|Q_i(\tilde f)(z)|. $$
Choose $R^{o}\subset R$ satisfying Lemma \ref{4.1} v) with respect to numbers $\bigl \{\dfrac{\beta \|\tilde f(z)\|^d}{|Q_i(\tilde f)(z)|}\bigl \}_{i=1}^q$.  Then, we get
$$\dfrac{\|\tilde f(z)\|^{d(\sum_{i=1}^q\omega_i)}}{|Q_1^{\omega_1}(\tilde f)(z)\cdots Q_q^{\omega_q}(\tilde f)(z)|}\le C\prod_{i\in R}\left (\dfrac{\beta\|f(z)\|^d}{|Q_i(\tilde f)(z)|}\right )^{\omega_i}\le C'\dfrac{\|\tilde F\|^{k+1}(z)}{\prod_{i\in R^o}|Q_i(\tilde f)|(z)},$$
where $C'$ are positive constants (chosen commonly for all $R$). 

Therefore, for each  $z\in \Delta(R)$, we have
\begin{align*}
\log \left (\dfrac{\|f(z)\|^{d(\sum_{i=1}^q\omega_i)}}{|Q_1^{\omega_1}(\tilde f)(z)\cdots Q_q^{\omega_q}(\tilde f)(z)|}\right )\le \log\max_{R^o}\dfrac{\|\tilde F\|^{k+1}(z)}{\prod_{i\in R^o}|Q_i(\tilde f)|(z)}+O(1),
\end{align*}
where the maximum is taken over all subsets $R^o\subset\{1,\ldots,q\}$ such that $\{Q_i(\tilde f);i\in R^0\}$ is linear independent over $\C$. Integrating both sides of the above inequality and applying Theorem \ref{2.1} with the note that $\sum_{i=1}^q\omega_i=\tilde\omega_i(q-2N+k-1)+k+1$, we get
\begin{align}\label{4.5}
\begin{split}
\|_\gamma\   &d(\tilde\omega(q-2N+k-1)+k+1)T_f(r)-\sum_{i=1}^{q}\omega_iN_{Q_i(\tilde f)}(r)\\
&\le H_V(d)T_F(r)-N_{W}(r)+\frac{(H_V(d)-1)H_V(d)}{2}(\log\gamma(r)+\epsilon\log r)+o(T_f(r)).
\end{split}
\end{align}
As usual argument in Nevanlinna theory, we have
$$\sum_{i=1}^q\omega_iN_{Q_i(\tilde f)}(r)-N_{W}(r)\le \sum_{i=1}^q\omega_iN^{[H_V(d)-1]}_{Q_i(\tilde f)}(r).$$
Combining this inequality and (\ref{4.5}), we obtain
\begin{align*}
\|_\gamma\   &d(\tilde\omega(q-2N+k-1)+k+1-H_V(d))T_f(r)\\
&\le \sum_{i=1}^q\omega_iN^{[H_V(d)-1]}_{Q_i(\tilde f)}(r)+\frac{(H_V(d)-1)H_V(d)}{2}(\log\gamma(r)+\epsilon\log r)+o(T_f(r)).
\end{align*}
Since $\dfrac{k+1}{2N-k+1}\le \tilde\omega\le\frac{k}{N}$, the above inequality  implies that
\begin{align*}
\bigl\|_\gamma\  &\left(q-\frac{(2N-k+1)H_V(d)}{k+1}\right)T_f(r)\\
&\le \sum_{i=1}^q\dfrac{1}{d}N^{[H_V(d)-1]}_{Q_i(f)}(r)+\frac{(H_V(d)-1)H_V(d)N}{2dk}(\log\gamma(r)+\epsilon\log r)+o(T_f(r)).
\end{align*}
The theorem is proved.
\end{proof}

\section*{Disclosure statement}
No potential conflict of interest was reported by the author(s).

\vskip0.2cm
{\footnotesize 
\noindent
{\sc Si Duc Quang}
\vskip0.05cm
\noindent
$^1$Department of Mathematics, Hanoi National University of Education,\\
136-Xuan Thuy, Cau Giay, Hanoi, Vietnam.
\vskip0.05cm
\noindent
$^2$Thang Long Institute of Mathematics and Applied Sciences,\\
Nghiem Xuan Yem, Hoang Mai, HaNoi, Vietnam.
\vskip0.05cm
\noindent
\textit{E-mail}: quangsd@hnue.edu.vn

\end{document}